\newcounter{thm}[section]
\newtheorem{theor}[thm]{Theorem}
\newtheorem{lem}[thm]{Lemma}
\newtheorem{proposition}[thm]{Proposition}
\newcommand{\m}{\hspace*{-.25cm}}
\newcommand{\PP}{\mathbb{P}}
\newcommand{\R}{\mathbb{R}}
\newcommand{\bqa}{\begin{eqnarray*}}
\newcommand{\eqa}{\end{eqnarray*}}
\begin{document}

\title{A General Approach for Cure Models \\ in Survival Analysis}

\author{
{\large Valentin P\textsc{atilea}
\footnote{CREST (Ensai), France. V. Patilea acknowledges support from the research program \emph{New Challenges for New Data}
of LCL and Genes. Email address: \texttt{patilea@ensai.fr}.}}
\and
\addtocounter{footnote}{2}
{\large Ingrid V\textsc{an} K\textsc{eilegom}
\footnote{
ORSTAT, Katholieke Universiteit Leuven, Belgium.  I. Van Keilegom acknowledges support from the European Research Council (2016-2021, Horizon 2020 / ERC grant agreement No.\ 694409), and from IAP Research Network P7/06 of the Belgian State.  Email address: \texttt{ingrid.vankeilegom@kuleuven.be}. }}
}

\date{\today}

\maketitle

\begin{abstract}
In survival analysis it often happens that some subjects under study do not experience the event of interest; they are considered to be `cured'.  The population is thus a mixture of two subpopulations~: the one of cured subjects, and the one of `susceptible' subjects.  When covariates are present, a so-called mixture cure model can be used to model the conditional survival function of the population.  It depends on two components~: the probability of being cured and the conditional survival function of the susceptible subjects.  

In this paper we propose a novel approach to estimate a mixture cure model when the data are subject to random right censoring.  We work with a parametric model for the cure proportion (like e.g.\ a logistic model), while the conditional survival function of the uncured subjects is unspecified.  The approach is based on an inversion which allows to write the survival function as a function of the distribution of the observable random variables.  
This leads to a very general class of models, which allows a flexible and rich modeling of the conditional survival function.  We show the identifiability of the proposed model, as well as the weak consistency and the asymptotic normality of the model parameters.  We also consider in more detail the case where kernel estimators are used for the nonparametric part of the model.  The new estimators are compared with the estimators from a Cox mixture cure model via finite sample simulations.  Finally, we apply the new model and estimation procedure on two medical data sets. 

\end{abstract}

\smallskip

\noindent Key Words: Asymptotic normality; bootstrap; kernel smoothing; logistic regression; mixture cure model; semiparametric model. \\

\smallskip \noindent MSC2010:  62N01, 62N02, 62E20, 62F12, 62G05.

\newpage
\normalsize

\baselineskip 16pt

\pagestyle{plain}
\setcounter{footnote}{0}
\setcounter{equation}{0}


\section{Introduction}

Driven by emerging applications, over the last two decades there has been an increasing interest for time-to-event analysis models allowing the situation where a fraction of the right censored observed lifetimes corresponds to subjects who will never experience the event. In biostatistics such models including covariates are usually called \emph{cure models} and they allow for a positive \emph{cure fraction} that corresponds to the proportion of patients cured of their disease. For a review 
of these models in survival analysis, see for instance Maller \& Zhou (2001) or Peng \& Taylor (2014).
Economists sometimes call such models \emph{split population models} (see Schmidt \& Witte 1989), while the reliability engineers refer to them as \emph{limited-failure population life models} (Meeker 1987).  

At first sight, a cure regression model is nothing but a binary outcome, cured versus uncured, regression problem. The difficulty  comes from the fact that the cured subjects are unlabeled observations among the censored data. Then one has to use all the observations, censored and uncensored, to complete the missing information and thus to identify, estimate and make inference on the cure fraction regression function. We propose a general approach
for this task, a tool that provides a general ground for cure regression models. The idea is to start from the laws of the observed variables and to express the quantities of interest, such as the cure rate and the conditional survival of the uncured subjects, as functionals of these laws. 
These general expressions, that we call inversion formulae and we derive with no particular constraint on the 
space of the covariates, are the vehicles that allow for a wide modeling choice, parametric, semiparametric and nonparametric, for both the law of the lifetime of interest and the cure rate. Indeed, the inversion formulae allow to express the likelihood of the binary outcome model as a function of the laws of the observed variables. 
The likelihood estimator of the parameter vector of the cure fraction function is then simply the maximizer of the likelihood obtained by replacing  the laws of the observations by some estimators. With at hand the estimate of the parameter of the cure fraction, the inversion formulae will provide an estimate for the conditional survival of the uncured subjects.  For the sake of clarity, we focus on the so-called mixture cure models with a parametric cure fraction function, the type of model that is most popular among practitioners. Meanwhile, the lifetime of interest is left unspecified.

The paper is organized as follows. In Section \ref{secc_2} we provide a general description of the mixture cure models and next we introduce the  needed notation and  present the inversion formulae on which our approach is built. We finish Section \ref{secc_2} by a discussion of the identification issue and some new  insight on the existing approaches in the literature on cure models.  Section \ref{secc_3} introduces the general maximum likelihood estimator, while in Section \ref{secc_4} we derive the general asymptotic results. A simple bootstrap procedure for making feasible inference is proposed. Section \ref{secc_4} ends with an illustration of the general approach in the case where the conditional law of the observations is estimated by kernel smoothing. In Sections \ref{sec_simul} and \ref{sec_data} we report some empirical results obtained with simulated and two real data sets. Our estimator performs well in simulations and provides similar or more interpretable results in applications compared with a competing logistic/proportional hazards mixture  approach. The technical proofs are relegated to the Appendix.

\section{The model}\label{secc_2}

\setcounter{equation}{0}

\subsection{A general class of mixture cure models}\label{sec_2a}

Let $T$ denote (a possible monotone transformation of) the lifetime of interest that takes values in $(-\infty,\infty]$. A cured observation corresponds to the event $\{T=\infty\}$, and in the following this event is allowed to have a positive probability.  Let $X$ be a 
covariate vector with support $\mathcal{X}$ belonging to a general covariate space. The covariate vector could include discrete and continuous components.  The survival function $F_T((t,\infty] \mid x) = \PP(T > t \mid X=x)$ for $t\in \R$ and $x \in \mathcal{X}$ can be written as
\bqa
F_T((t,\infty] \mid x) &=& 1-\phi(x) + \phi(x) F_{T,0}((t,\infty) \mid x), 
\eqa
where $\phi(x) = \PP(T < \infty \mid X=x)$ and $F_{T,0}((t,\infty) \mid x) = \PP(T > t \mid X=x, T < \infty)$.  Depending on which model is used for $\phi(x)$ and $F_{T,0}(\cdot \mid x)$, one obtains a parametric, semiparametric or nonparametric model, called a  `mixture cure model'.   In the literature, one often assumes that $\phi(x)$ follows a logistic model, i.e.\ $\phi(x) = \exp(a+x^\top b) / [1+\exp(a+x^\top b)]$ for some $(a,b^\top)^\top \in \R^{d+1}$.   Recently, semiparametric models (like a single-index model as in Amico \emph{et al.} 2017) or nonparametric models (as in Xu \& Peng 2014 or L\'opez-Cheda \emph{et al.} 2017) have been proposed.   As for the survival function $F_{T,0}(\cdot \mid x)$ of the susceptible subjects, a variety of models have been proposed, including parametric models (see e.g.\ Boag 1949, Farewell 1982), semiparametric models based on a proportional hazards assumption (see e.g.\ Kuk \& Chen 1992, Sy \& Taylor 2000, Fang \emph{et al.} 2005, Lu 2008; see also Othus \emph{et al.} 2009) or nonparametric models (see e.g.\ Taylor 1995, Xu \& Peng 2014).

In this paper we propose to model $\phi(x)$ parametrically, i.e.\ we assume that $\phi(\cdot)$ belongs to the family of conditional probability functions
$$ \{\phi(\cdot , \beta): \beta \in B\}, $$
where $\phi(\cdot,\beta)$ takes values in the interval $(0,1)$, $\beta$ is the parameter vector of the model and $B$ is the parameter set.  This family could be the logistic family or any other parametric family.   For the survival function $F_{T,0}(\cdot \mid x)$ we do not impose any assumptions in order to have a flexible and rich class of models for $F_T(\cdot \mid x)$ to choose from.   Later on we will see that for the estimation of $F_{T,0}(\cdot \mid x)$ any estimator that satisfies certain minimal conditions can be used, and hence we allow for a large variety of parametric, semiparametric and nonparametric estimation methods.  

As is often the case with time-to-event data, we assume that the lifetime $T$ is subject to random right censoring, i.e.\ instead of observing $T$, we only observe the pair $(Y,\delta)$, where $Y = T\wedge C$,  $\delta = \textbf{1}\{ T\leq C \}$ and $C$ is a non-negative random variable, called the censoring time.  Some identification assumptions are required to be able to identify the conditional law of $T$ from the observed variables $Y$ and $\delta$. Let us assume that
\begin{equation}\label{iden_cd1}
C\perp T \mid X\qquad \text{ and } \qquad  \mathbb{P}(C<\infty)=1.
\end{equation}
The conditional independence between $T$ and $C$ is an usual identification assumption in survival analysis in the presence of covariates. The zero probability at infinity condition for $C$ implies that $\mathbb{P}(C<\infty\mid X)=1$ almost surely (a.s.). This latter mild condition is required if we admit that the observations $Y$ are finite, which is the case in the common applications. 
For the sake of simplicity, let us also consider the condition
\begin{equation}\label{iden_cd2}
\mathbb{P}(T=C)=0,
\end{equation}
which is commonly used in survival analysis, and which implies that $\mathbb{P}(T=C\mid X)=0$ a.s.

\subsection{Some notations and preliminaries}

We start with some preliminary arguments, which are valid in general without assuming any model on the functions $\phi$, $F_T$ and $F_C$.  

The observations are characterized by the conditional sub-probabilities
\begin{eqnarray*}
H_1((-\infty,t]\mid x) &=& \mathbb{P}(Y\leq t ,\delta = 1 \mid X=x)\\
H_0((-\infty,t]\mid x) &=& \mathbb{P}(Y\leq t ,\delta = 0 \mid X=x),\qquad t \in \R,\; x\in\mathcal{X} .
\end{eqnarray*}
Then $H((-\infty,t]\mid x) \stackrel{def}{=} \mathbb{P}( Y \leq t \mid X=x ) = H_0((-\infty,t]\mid x) + H_1((-\infty,t]\mid x).$ Since we assume that $Y$ is finite, we have
\begin{equation}\label{finite_H}
H((-\infty,\infty) \mid x) =1,\qquad \forall x\in\mathcal{X}.
\end{equation}
For $j\in\{0,1\}$ and $x\in\mathcal{X},$ let $\tau_{H_j} (x)=\sup\{ t: H_j([t,\infty)\mid x)>0 \}$ denote the right endpoint of the support of the conditional sub-probability $H_j$. Let us define $\tau_{H} (x)$ in a similar way and note that $\tau_{H} (x)=\max\{ \tau_{H_0} (x), \tau_{H_1} (x) \}$.  Note that $\tau_{H_0}(x)$, $\tau_{H_1}(x)$ and $\tau_H(x)$ can equal infinity, even though $Y$ only takes finite values.  For $-\infty < t\leq \infty,$ let us define the conditional probabilities
$$
F_C((-\infty,t]\mid x)= \mathbb{P} (C\leq t\mid X=x)\quad \text{ and } \quad
F_T((-\infty,t]\mid x)= \mathbb{P} (T\leq t\mid X=x),\quad x\in\mathcal{X}.
$$

Let us show how the probability of being cured could be identified from the observations without any reference to a model for this probability. 
Under conditions (\ref{iden_cd1})-(\ref{iden_cd2}) we can write 
$$
H_1(dt\mid x) =  F_C ([t,\infty)\mid x)F_T (dt\mid x),
\qquad
H_0(dt\mid x) =  F_T ([t,\infty]\mid x) F_C (dt\mid x),
$$
and
$
H([t,\infty)\mid x) =  F_T ([t,\infty]\mid x) F_C ([t,\infty)\mid x).
$
These equations could be solved and thus they allow to express the functions $F_T (\cdot \mid x)$ and  $F_C (\cdot\mid x)$ in an unique way as explicit transformations of the functions
$H_0(\cdot\mid x)$ and $H_1(\cdot\mid x).$ For this purpose, let us consider the conditional cumulative hazard measures
$$
\Lambda_T (dt \mid x) =
\frac{F_T (dt\mid x)}{F_T ([t,\infty]\mid x)}\quad \text{ and }\quad \Lambda_C (dt \mid x) =
\frac{F_C (dt\mid x)}{F_C ([t,\infty)\mid x)}, \qquad x\in\mathcal{X}.
$$
The model equations yield
\begin{equation}\label{classical_KM}
\Lambda_T (dt \mid x) = \frac{H_1(dt \mid x)}{H([t,\infty)\mid x) }\quad \text{ and }\quad
\Lambda_C (dt \mid x) = \frac{H_0(dt \mid x)}{H([t,\infty)\mid x) }.
\end{equation}
Then, we can write the following functionals of $H_0(\cdot\mid x)$ and $H_1(\cdot\mid x)$ :
\begin{eqnarray}\label{inv_FT_1}
F_T ((t,\infty]\mid x) &= &\prod_{-\infty< s\leq t } \{ 1 - \Lambda_T(ds \mid x)  \},\notag\\
F_C ((t,\infty)\mid x) &=& \prod_{-\infty< s\leq t } \{ 1 - \Lambda_C(ds \mid x)  \}, \qquad t \in \R,
\end{eqnarray}
where $\prod_{s\in A}$ stands for the product-integral over the set $A$ (see Gill and Johansen 1990).

Moreover, if $\tau_{H_1}(x)<\infty$, then
\begin{equation*}
\mathbb{P}(T > \tau_{H_1}(x)\mid x ) = \prod_{t\in (-\infty,\tau_{H_1}(x)]} \{ 1 - \Lambda_T(dt \mid x)  \},
\end{equation*}
but there is no way to identify the conditional law of $T$  beyond $\tau_{H_1}(x).$ Therefore, we will impose 
\begin{equation} \label{iden_cd3}
\mathbb{P}(T > \tau_{H_1}(x)\mid x )= \mathbb{P}(T =\infty \mid x ),
\end{equation}
i.e.\ $\prod_{t\in \R} \{ 1 - \Lambda_T(dt \mid x)  \} = \prod_{-\infty< t \leq \tau_{H_1}(x)} \{ 1 - \Lambda_T(dt \mid x)\}$.  
Note that if $\tau_{H_1}(x)=\infty,$ condition (\ref{iden_cd3}) is no longer an identification restriction, but just a simple consequence of the definition of $\Lambda_T(\cdot\mid x)$.
Finally, the condition that $\PP(C<\infty)=1$ in (\ref{iden_cd1}) can be re-expressed by saying that we assume that $H_0(\cdot \mid x)$ and $ H_1 (\cdot\mid x)$ are such that
\begin{equation}\label{id_mass_infty_C}
\mathbb{P}(C=\infty\mid x) = \prod_{t\in \R} \{ 1 - \Lambda_C(dt \mid x)  \}=0,\qquad \forall x\in\mathcal{X}.
\end{equation}
Let us point out that this condition is satisfied only if $\tau_{H_1}(x)\leq \tau_{H_0}(x)$.  Indeed, if  $\tau_{H_1}(x)> \tau_{H_0}(x)$ then necessarily $\tau_{H_0}(x)<\tau_H(x)$  and so $H([\tau_{H_0} (x),\infty)\mid x) >0.$ Hence, $\Lambda_C(\R\mid x) = \Lambda_C((-\infty,\tau_{H_0} (x)]\mid x)<\infty$, and thus $\mathbb{P}(C=\infty\mid x)>0$, which contradicts 
(\ref{id_mass_infty_C}).  

It is important to understand that \emph{any} two conditional sub-probabilities $H_0(\cdot \mid x)$ and $H_1 (\cdot\mid x)$ satisfying conditions (\ref{iden_cd1}) and (\ref{iden_cd2}) 
define uniquely $F_T (\cdot \mid x) $ and  $F_C (\cdot \mid x) $. Indeed, $F_T (\cdot \mid x) $ is precisely the probability distribution of $T$ given $X=x$ with all the mass beyond $\tau_{H_1}(x)$ concentrated at infinity. In general, $F_T (\cdot \mid x) $ and $F_C (\cdot \mid x) $ are only functionals of $H_0 (\cdot \mid x) $ and $H_1 (\cdot \mid x) $.

We will assume conditions (\ref{iden_cd1}), (\ref{iden_cd2})  and (\ref{iden_cd3}) throughout the paper.

\subsection{A key point for the new approach: the inversion formulae}

Write
\begin{eqnarray*}
H([t,\infty)\mid x ) &=& F_T ([t,\infty]\mid x) F_C ([t,\infty)\mid x) \notag\\
&=& F_T ([t,\infty)\mid x) F_C ([t,\infty)\mid x)
+ \mathbb{P}( T=\infty \mid x)F_C ([t,\infty)\mid x),
\end{eqnarray*}
and thus
\begin{equation}\label{eq_Hbis}
F_T ([t,\infty)\mid x) = \frac{H([t,\infty)\mid x ) - \mathbb{P}( T=\infty \mid x)F_C ([t,\infty)\mid x)}{F_C ([t,\infty)\mid x)}.
\end{equation}
Consider the conditional cumulative hazard measure for the finite values of the lifetime of interest:
$$
\Lambda_{T,0} (dt \mid x) \stackrel{def}{=} \frac{F_{T,0} (dt\mid x)}{F_{T,0} ([t,\infty)\mid x)}= \frac{F_T (dt\mid x)}{F_T ([t,\infty)\mid x)}
$$
for $t \in \R$.  Since $H_1(dt\mid x) =  F_C ([t,\infty)\mid x)F_T (dt\mid x)$, using relationship  (\ref{eq_Hbis}) we obtain
\begin{eqnarray}\label{inversion1}
\Lambda_{T,0} (dt \mid x) &=& \frac{H_1(dt \mid x)}{H([t,\infty)\mid x) - \mathbb{P}(T=\infty \mid x)F_C ([t,\infty)\mid x)}.
\end{eqnarray}
Next, using the product-integral we can write
\begin{equation}\label{inv_FT}
F_{T,0} ((t,\infty)\mid x) = \prod_{-\infty < s \leq t } \{ 1 - \Lambda_{T,0}(ds \mid x)  \},\quad t \in \R, \; x\in\mathcal{X}.
\end{equation}

Let us recall that $F_C (\cdot\mid x)$ can be written as a transformation of $H_0(\cdot\mid x)$ and $H_1(\cdot\mid x)$, see equations (\ref{classical_KM}) and (\ref{inv_FT_1}). This representation is not surprising
since  we can consider $C$ as a lifetime of interest and hence $T$ plays the role of a censoring variable. Hence, estimating the conditional distribution function  $F_C (\cdot\mid x)$ should not be more complicated than in a classical conditional Kaplan-Meier setup, since the fact that $T$ could be equal to infinity with positive conditional probability is irrelevant when estimating $F_C (\cdot\mid x)$.

Finally, the representation of $F_C (\cdot\mid x)$ given in equation (\ref{inv_FT_1}), plugged into equation (\ref{inversion1}),  allows to express $\Lambda_{T,0}(\cdot\mid x)$, and thus $F_{T,0}(\cdot\mid x)$, as maps of $\mathbb{P}(T=\infty\mid x)$ and the measures $H_0(\cdot\mid x)$ and $H_1(\cdot\mid x).$ This will be the key element for providing more insight in the existing approaches and the starting point of our new approach.

\subsection{Model identification issues}

Let us now investigate the identification issue.  Recall that our model involves the functions $F_T(\cdot\mid x)$, $F_C (\cdot\mid x)$ and $\phi(\cdot,\beta)$, and the assumptions (\ref{iden_cd1}), (\ref{iden_cd2}) and (\ref{iden_cd3}). 
For a fixed value of the parameter $\beta,$ and for $t \in \R$ and $x\in\mathcal{X}$, let
\begin{equation}\label{inv_LamT_beta}
\Lambda_{T,0}^\beta(dt\mid x) = \frac{H_1(dt \mid x)}{H([t,\infty)\mid x) - [1-\phi(x,\beta)]F_C ([t,\infty)\mid x)},
\end{equation}
and
\begin{equation}\label{inv_FT_beta}
F_{T,0}^\beta ((t,\infty)\mid x) = \prod_{-\infty < s \leq t } \left\{ 1 -
\Lambda_{T,0}^\beta (ds\mid x)\right\}.
\end{equation}

Let $F_{Y,\delta} (\cdot,\cdot\mid x)$ denote the conditional law of $(Y,\delta)$ given $X=x$. Moreover, let
$$
F_{Y,\delta}^\beta (dt,1\mid x) = \phi(x,\beta) F_C((t,\infty)\mid x) F_{T,0}^\beta (dt\mid x)
$$
and
$$
F_{Y,\delta}^\beta (dt,0\mid x) = [ F_{T,0}^\beta ((t,\infty)\mid x)\phi(x,\beta) + 1-\phi(x,\beta)] F_C(dt\mid x).
$$
These equations define a conditional law for the observations $(Y,\delta)$ based on the model. More precisely, for a choice of $F_T(\cdot\mid x)$,  $F_C(\cdot\mid x)$ and $\beta$, the model yields a conditional law for $(Y,\delta)$ given $X=x$.
If the model is correctly specified, there exists a value $\beta_0$ such that
\begin{equation}\label{def_beta_0}
F_{Y,\delta} (\cdot,\cdot\mid x) = F_{Y,\delta}^{\beta_0} (\cdot,\cdot\mid x), \quad \forall x\in\mathcal{X}.
\end{equation}

The remaining question is whether the true value of the parameter is identifiable. In other words, one should check if, given the conditional subdistributions $H_0 (\cdot\mid x)$ and $H_1 (\cdot\mid x),$ $x\in\mathcal{X},$ there exists a unique $\beta_0$ satisfying condition (\ref{def_beta_0}).  For this purpose we impose the following mild condition:
\begin{equation}\label{model_inden}
\phi(X,\beta)
=\phi(X,\widetilde \beta)
,\;\; \text{almost surely} \quad \Rightarrow \beta=\widetilde \beta,
\end{equation}
and we show that
$$
F_{Y,\delta}^{\beta_0} (\cdot,\cdot\mid x) = F_{Y,\delta}^{\widetilde\beta} (\cdot,\cdot\mid x) ,\;\; \forall x\in\mathcal{X} \;\; \Rightarrow \;\; \phi(x,\beta_0) =\phi(x,\widetilde \beta), \;\; \forall x\in\mathcal{X}.
$$
Indeed, if $F_{Y,\delta}^{\beta_0} (\cdot,\cdot\mid x) = F_{Y,\delta}^{\widetilde\beta} (\cdot,\cdot\mid x),$ then for any $x$,
$$
\phi(x,\beta_0) F_C((t,\infty)\mid x) F_{T,0}^{\beta_0} (dt\mid x) = \phi(x,\widetilde \beta) F_C((t,\infty)\mid x) F_{T,0}^{\widetilde \beta} (dt \mid x),
$$
for all $t\in(-\infty,\tau_H(x)]\cap \R.$
Our condition (\ref{id_mass_infty_C}) guarantees that $\tau_H(x)=\tau_{H_0}(x)$ so that $F_C((t,\infty)\mid x)$ should be necessarily positive for $t\in(-\infty,\tau_H(x)),$ and thus could be simplified in the last display. Deduce that
$$
\phi(x,\beta_0) F_{T,0}^{\beta_0} (dt\mid x) = \phi(x,\widetilde \beta) F_{T,0}^{\widetilde \beta} (dt\mid x),
$$
for all $t\in(-\infty,\tau_H(x))$.  Finally, recall that by construction, in the model we consider, $ F_{T,0}^\beta ((-\infty,\infty)\mid x)=1$ for any $\beta$ such that $F_{Y,\delta}^{\beta} (\cdot,\cdot\mid x)$ coincides with the conditional law of $(Y,\delta)$ given $X=x$, for any $x$. 
Thus taking integrals on $(-\infty,\infty)$ on both sides of the last display we obtain $\phi(\cdot,\beta_0) =\phi(\cdot,\widetilde \beta)$. Let us gather these facts in the following statement.

\begin{theor}\label{th_ident}
Under conditions (\ref{iden_cd1}), (\ref{iden_cd2}), (\ref{iden_cd3}) and (\ref{model_inden}) the model is identifiable.
\end{theor}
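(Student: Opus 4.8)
The plan is to reduce the identifiability of the whole model to the injectivity condition (\ref{model_inden}) on the cure fraction. Since the model is parametrized by $\beta$ only through $\phi(\cdot,\beta)$, while $F_C(\cdot\mid x)$ is a functional of the observable subdistributions $H_0(\cdot\mid x)$ and $H_1(\cdot\mid x)$ and is therefore common to every admissible parameter, it suffices to establish the implication
\[
F_{Y,\delta}^{\beta_0}(\cdot,\cdot\mid x) = F_{Y,\delta}^{\widetilde\beta}(\cdot,\cdot\mid x)\ \ \forall x\in\mathcal{X} \quad\Rightarrow\quad \phi(x,\beta_0)=\phi(x,\widetilde\beta)\ \ \forall x\in\mathcal{X},
\]
where $\beta_0$ is the true value singled out by (\ref{def_beta_0}) and $\widetilde\beta$ is any observationally equivalent candidate. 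Once this is in hand, condition (\ref{model_inden}) immediately yields $\beta_0=\widetilde\beta$. First I would isolate the uncensored component $\delta=1$ of the equality of conditional laws, which carries all the information about $\phi$.

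Comparing the $\delta=1$ subdensities gives
\[
\phi(x,\beta_0)\,F_C((t,\infty)\mid x)\,F_{T,0}^{\beta_0}(dt\mid x) = \phi(x,\widetilde\beta)\,F_C((t,\infty)\mid x)\,F_{T,0}^{\widetilde\beta}(dt\mid x).
\]
The factor $F_C((t,\infty)\mid x)$ is common to both sides, and the first delicate point is to cancel it. This is legitimate exactly where $F_C((t,\infty)\mid x)>0$, and here I would invoke condition (\ref{id_mass_infty_C}): it forces $\tau_{H_1}(x)\le\tau_{H_0}(x)$, hence $\tau_H(x)=\tau_{H_0}(x)$, which guarantees strict positivity of $F_C((t,\infty)\mid x)$ for every $t<\tau_H(x)$. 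After cancellation I obtain the measure identity
\[
\phi(x,\beta_0)\,F_{T,0}^{\beta_0}(dt\mid x) = \phi(x,\widetilde\beta)\,F_{T,0}^{\widetilde\beta}(dt\mid x),\qquad t\in(-\infty,\tau_H(x)).
\]

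The concluding step is to integrate this identity over the real line and exploit that, by construction, each $F_{T,0}^{\beta}(\cdot\mid x)$ is a genuine probability measure of total mass one. I expect the main obstacle to sit precisely here, because the identity has only been established on $(-\infty,\tau_H(x))$, so I must argue that neither $F_{T,0}^{\beta_0}$ nor $F_{T,0}^{\widetilde\beta}$ places any mass beyond $\tau_H(x)$. This is exactly what the identification restriction (\ref{iden_cd3}) provides: it concentrates all the mass of $T$ above $\tau_{H_1}(x)$ at $+\infty$, so that $F_{T,0}^{\beta}$, being the law of the \emph{finite} part of $T$, is supported on $(-\infty,\tau_{H_1}(x)]\subseteq(-\infty,\tau_H(x)]$ and still integrates to one (the boundary point $\tau_H(x)$ requiring only a routine left-continuity check). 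Integrating both sides of the measure identity then gives $\phi(x,\beta_0)\cdot 1=\phi(x,\widetilde\beta)\cdot 1$, i.e.\ $\phi(x,\beta_0)=\phi(x,\widetilde\beta)$ for all $x\in\mathcal{X}$. A final application of (\ref{model_inden}) delivers $\beta_0=\widetilde\beta$ and proves the model is identifiable.
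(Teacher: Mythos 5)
Your proposal is correct and follows essentially the same route as the paper's own argument: compare the $\delta=1$ components of the two conditional laws, cancel the factor $F_C((t,\infty)\mid x)$ using the positivity guaranteed by (\ref{id_mass_infty_C}) (i.e.\ $\tau_H(x)=\tau_{H_0}(x)$), integrate the resulting measure identity over $(-\infty,\infty)$ using that each $F_{T,0}^\beta(\cdot\mid x)$ has total mass one, and conclude via (\ref{model_inden}). Your explicit appeal to (\ref{iden_cd3}) to justify that neither measure charges $[\tau_H(x),\infty)$ is a slightly more careful rendering of a step the paper handles by simply invoking $F_{T,0}^\beta((-\infty,\infty)\mid x)=1$ by construction, but the substance is the same.
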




\subsection{Interpreting the previous modeling approaches}\label{sec_inter}

We suppose here that the function $\phi(x)$ follows a logistic model, and comment on several models for $F_{T,0}$ that have been considered in the literature.


\subsubsection{Parametric and proportional hazards mixture model} 

In a parametric modeling, one usually supposes that $\tau_{H_0}(x) = \tau_{H_1}(x) = \infty$ and that $\Lambda_{T,0} (\cdot \mid x)$ belongs to a parametric family of cumulative hazard functions, like for instance the Weibull model; see Farewell (1982).

Several contributions proposed a more flexible semiparametric proportional hazards (PH) approach; see Fang \emph{et al.} (2005), Lu (2008) and the references therein. In such a model one imposes a PH structure for the $\Lambda_{T,0}(\cdot\mid x)$ measure. More precisely, it is supposed that
\begin{equation*}\label{cox}
\Lambda_{T,0} (dt \mid x) = \frac{H_1(dt \mid x)}{H([t,\infty)\mid x) - [1-\phi(x,\beta)]F_C ([t,\infty)\mid x)}=\exp(x^\top \gamma)\Lambda_0(dt),
\end{equation*}
where $\gamma$ is some parameter to be estimated and $\Lambda_0(\cdot)$ is an unknown baseline cumulative hazard function.
Our inversion formulae reveal that in this approach the parameters $\gamma$ and $\Lambda_0$ depend on the observed conditional measures $H_0(\cdot\mid x)$ and $H_1(\cdot\mid x),$ but also on the parameter $\beta$.  The same is true for the parametric models. 


\subsubsection{Kaplan-Meier mixture cure model}

Taylor (1995) suggested to estimate $F_{T,0}$ using a Kaplan-Meier type estimator.
With such an approach one implicitly assumes that the law of $T$ given $X$ and given that $T<\infty$ does not depend on $X$. This is equivalent to supposing that $\Lambda_{T,0} (\cdot \mid x)=\Lambda_{T,0} (\cdot)$.  Next, to estimate $\Lambda_{T,0} (\cdot)$ one has to modify the unconditional version of the usual inversion formulae (\ref{classical_KM}) to take into account the conditional probability of the event $\{T=\infty\}$. Following Taylor's approach we rewrite (\ref{inversion1}) as
$$
\Lambda_{T,0} (dt) =\frac{H_1(dt \mid x)}{H_1([t,\infty)\!\mid\! x) \!+ \int_{[t,\infty)} \!\left\{1- \frac{1\!-\phi(x,\beta)} {\phi(x,\beta)F_{T,0}([s,\infty)) +1-\phi(x,\beta)} \right\}\!H_0 (ds\mid x)}.
$$
Next, assume that the last equality remains true if $H_0(dt \mid x)$ and $H_1(dt \mid x)$ are replaced by their unconditional versions, that is assume that
\begin{equation}\label{Taylor_imp}
\Lambda_{T,0} (dt) = \frac{H_1(dt)}{H_1([t,\infty)) + \int_{[t,\infty)} \left\{1- \frac{1-\phi(x,\beta)} {\phi(x,\beta)F_{T,0}([s,\infty)) +1-\phi(x,\beta)} \right\}H_0 (ds)}.
\end{equation}
See equations (2) and (3) in Taylor (1995). The equation above could be solved iteratively by a EM-type procedure: for a given $\beta$ and an iteration
$F_{T,0}^{(m)}(\cdot),$ build $\Lambda_{T,0}^{(m+1)} (dt)$ and the updated estimate $F_{T,0}^{(m+1)}(\cdot)$; see Taylor (1995) for the details.  
Let us point out that even if $(T,C)$ is independent of $X$ and thus $H_1(\cdot\mid x)$ does not depend on $x$, the subdistribution $H_0(\cdot \mid x)$ still depends on $x$, since
\begin{eqnarray*}
H_0(dt\mid x) &=&  F_T ((t,\infty]\mid x) F_C (dt\mid x)\notag\\
&=& [ F_T ((t,\infty)\mid x) + \mathbb{P}(T=\infty\mid x) ] F_C (dt\mid x).
\end{eqnarray*}
Hence, a more natural form of equation (\ref{Taylor_imp}) is
\begin{equation*}
\Lambda_{T,0} (dt) = \frac{H_1(dt)}{H_1([t,\infty)) + \int_{[t,\infty)} \left\{1- \frac{1-\phi(x,\beta)} {\phi(x,\beta)F_{T,0}([s,\infty)) +(1-\phi(x,\beta))} \right\}H_0 (ds\mid x)}.
\end{equation*}
The investigation of a EM-type procedure based on the latter equation will be considered elsewhere.


%

%
%
%
%

\section{Maximum likelihood estimation}\label{secc_3}

\setcounter{equation}{0}

Let $(Y_i,\delta_i,X_i)$ ($i=1,\ldots,n$) be a sample of $n$ i.i.d.\ copies of the vector $(Y,\delta,X)$.

We use a likelihood approach based on formulae (\ref{inversion1}) and (\ref{inv_FT_1}) to build an estimator of $\phi(\cdot,\beta)$ and $F_{T,0}^\beta(\cdot\mid x).$ To build the likelihood we use estimates $\widehat H_k (\cdot \mid x)$ of the subdistributions
$H_k (\cdot \mid x),$ $k\in\{0,1\}.$ These estimates are constructed with the sample of $(Y,\delta,X),$ without reference to any model for the conditional probability $\mathbb{P}(T<\infty\mid x).$ At this stage it is not necessary to impose a particular form for $\widehat H_k (\cdot \mid x)$. To derive the asymptotic results we will only impose that these estimators satisfy some mild  conditions.  Let $\widehat F_{T,0}^\beta(\cdot\mid x)$ be defined as in equations (\ref{inv_LamT_beta}) and (\ref{inv_FT_beta}) with $\widehat H_0 (\cdot \mid x)$ and $\widehat H_1 (\cdot \mid x)$ instead of $H_0 (\cdot \mid x)$ and $H_1 (\cdot \mid x)$, that is
$$
\widehat F_{T,0}^\beta ((t,\infty)\mid x) = \prod_{-\infty < s \leq t } \left\{ 1 -
\frac{\widehat H_1(ds \mid x)}{\widehat H([s,\infty)\mid x) - [1-\phi(x,\beta)]\widehat F_C ([s,\infty)\mid x)}\right\},
$$
where $\widehat F_{C}(\cdot\mid x)$ is the estimator obtained from equations (\ref{classical_KM}) and (\ref{inv_FT_1}) but with $\widehat H_0 (\cdot \mid x)$ and $\widehat H_1 (\cdot \mid x)$, i.e.
$$
\widehat F_C ((t,\infty)\mid x) = \prod_{-\infty< s\leq t } \left\{ 1 - \frac{\widehat H_0(ds \mid x)}{\widehat H([s,\infty)\mid x) }  \right\}.
$$


Let $f_X$ denote the density of the covariate vector with respect to some dominating measure. The contribution of the observation $(Y_i,\delta_i, X_i)$ to the likelihood when $\delta_i= 1 $
is then
$$
\phi(X_i,\beta) \widehat F_{T,0}^\beta(\{Y_i\}\mid X_i) f_X(X_i) \widehat F_C ((Y_i,\infty)\mid X_i),
$$
while the contribution when $\delta_i= 0$ is
$$
\widehat F_C (\{Y_i\}\mid X_i)f_X(X_i) [\phi(X_i,\beta)\widehat F_{T,0}^\beta ((Y_i,\infty)\mid X_i) + 1 - \phi(X_i,\beta)].
$$
Since the laws of the censoring variable and of the covariate vector do not carry information of the parameter $\beta$, 
we can drop the factors $\widehat F_C ((Y_i,\infty)\mid X_i) f_X(X_i)$ and $\widehat F_C (\{Y_i\}\mid X_i)f_X(X_i).$ 
Hence the criterion to be maximized with respect to $\beta\in B$ is $\widehat L_n(\beta)$ where
$$
\widehat L_n(\beta)=\prod_{i=1}^n \Big\{ \phi(X_i, \beta)\widehat F_{T,0}^\beta(\{Y_i\}\mid X_i) \Big\}^{\delta_i} \Big\{ \phi(X_i,\beta)\widehat F_{T,0}^\beta ((Y_i,\infty)\mid X_i) + 1 - \phi(X_i,\beta) \Big\}^{1-\delta_i}.
$$
The estimator we propose is
\begin{equation}\label{estim_def}
\widehat \beta = \arg\max_{\beta \in B} \log \widehat L_n(\beta).
\end{equation}

Let us review the identification issue in the context of the likelihood estimation approach. If  conditions (\ref{iden_cd1}), (\ref{iden_cd2}) and (\ref{iden_cd3}) hold true and the parametric model for the conditional probability of the event $\{T=\infty\}$ is correct and identifiable in the sense of condition (\ref{model_inden}), the true parameter $\beta_0$ is the value identified by condition (\ref{def_beta_0}).  This is the conclusion of Theorem \ref{th_ident} above. It remains to check that the proposed likelihood approach allows to consistently estimate $\beta_0.$

%



Let
\begin{equation}\label{bernoulli_lik}
\log p(t,\delta,x;\beta) = \delta \log p_1(t,x;\beta) + (1-\delta)  \log p_0(t,x;\beta),
\end{equation}
with
\begin{equation}\label{p_bernoulli1}
p_1(t,x;\beta) = \frac{\phi(x,\beta) F_{T,0}^\beta(dt\mid x) F_C ((t,\infty)\mid x)}{H_1(dt\mid x)},
\end{equation}
\begin{equation}\label{p_bernoulli2}
p_0(t,x;\beta) =\frac{ F_C (dt\mid x)[\phi(x,\beta)F_{T,0}^\beta ((t,\infty)\mid x) + 1 - \phi(x,\beta)]}{H_0(dt\mid x)}.
\end{equation}
Following a common notational convention, see for instance Gill (1994), here we treat $dt$ not just as the length of a small time interval $[t, t + dt)$ but also as the name of the interval
itself. Moreover, we use the convention $0/0=1.$  Let us notice that, up to additive terms  not containing $\beta,$ the function $\beta\mapsto \mathbb{E}[\log p(Y,\delta,X;\beta)]$ is expected to be the limit of the random function $\log \widehat L_n(\cdot).$ Hence, a minimal condition for guaranteeing the consistency of the likelihood estimation approach is that $\beta_0$ is the maximizer of the limit likelihood criterion $\mathbb{E}[\log p(Y,\delta,X;\cdot)]$. This is proved in the following proposition using a Bernoulli sample likelihood inequality.  The proof is given in the Appendix.

\begin{proposition}\label{ident_lik}
Suppose that conditions (\ref{iden_cd1}), (\ref{iden_cd2}), (\ref{iden_cd3}) and  (\ref{model_inden}) hold true. If $\beta_0$ is the value of the parameter defined by equation (\ref{def_beta_0}), then for any $\beta\neq\beta_0$,
$$
\mathbb{E}\left[
\log p(Y,\delta,X;\beta) \right] < \mathbb{E}\left[
\log p(Y,\delta,X;\beta_0) \right].
$$
\end{proposition}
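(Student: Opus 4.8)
The plan is to recognize the claimed inequality as the likelihood (information) inequality for the conditional law of $(Y,\delta)$ given $X$, and to reduce the whole argument to the elementary bound $\log u\le u-1$. First I would record that, by the very definition of $\beta_0$ in (\ref{def_beta_0}), the numerators of $p_1$ and $p_0$ in (\ref{p_bernoulli1})--(\ref{p_bernoulli2}) evaluated at $\beta_0$ are exactly $F_{Y,\delta}^{\beta_0}(dt,1\mid x)=H_1(dt\mid x)$ and $F_{Y,\delta}^{\beta_0}(dt,0\mid x)=H_0(dt\mid x)$, so that $p(t,\delta,x;\beta_0)\equiv 1$ (with the convention $0/0=1$) and the right-hand side $\mathbb{E}[\log p(Y,\delta,X;\beta_0)]$ equals $0$. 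It therefore suffices to prove $\mathbb{E}[\log p(Y,\delta,X;\beta)]<0$ for $\beta\neq\beta_0$. The key identity I would exploit is $p_k(t,x;\beta)\,H_k(dt\mid x)=F_{Y,\delta}^\beta(dt,k\mid x)$ for $k\in\{0,1\}$, valid even on null sets since the atoms of $F_{T,0}^\beta(\cdot\mid x)$ and $F_C(\cdot\mid x)$ occur only where $H_1$, resp.\ $H_0$, charge mass; in words, $p$ is the ratio of the model-induced conditional sub-laws of $(Y,\delta)$ to the true ones.

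The heart of the proof is the evaluation of $\mathbb{E}[p(Y,\delta,X;\beta)]$. Conditioning on $X=x$ and integrating against the true sub-laws gives
\[
\mathbb{E}[p(Y,\delta,X;\beta)\mid X=x]=\int_\R F_{Y,\delta}^\beta(dt,1\mid x)+\int_\R F_{Y,\delta}^\beta(dt,0\mid x)=:M(\beta,x),
\]
the total mass of the conditional law of $(Y,\delta)$ generated by the model at $\beta$. I would then show that $M(\beta,x)=1$ for \emph{every} $\beta$. Writing $S^\beta(t)=\phi(x,\beta)F_{T,0}^\beta([t,\infty)\mid x)+1-\phi(x,\beta)$, the two integrals are $\int F_C((t,\infty)\mid x)\,(-dS^\beta(t))$ and $\int S^\beta(t^+)\,F_C(dt\mid x)$, and integration by parts for the product of the two survival functions $t\mapsto S^\beta(t)$ and $t\mapsto F_C([t,\infty)\mid x)$ (the shared-atom corrections vanishing under (\ref{iden_cd2})) collapses their sum to
\[
M(\beta,x)=\lim_{t\to-\infty}S^\beta(t)F_C([t,\infty)\mid x)-\lim_{t\to+\infty}S^\beta(t)F_C([t,\infty)\mid x)=1-0=1,
\]
where the first limit is $1$ and the second vanishes because $\lim_{t\to+\infty}F_C([t,\infty)\mid x)=\PP(C=\infty\mid x)=0$ by (\ref{id_mass_infty_C}). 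I expect this to be the main obstacle: for $\beta\neq\beta_0$ the object $F_{T,0}^\beta(\cdot\mid x)$ need \emph{not} be a proper probability distribution (it may carry residual mass $\lim_{t\to\infty}F_{T,0}^\beta([t,\infty)\mid x)>0$), so that $M(\beta,x)=1$ is far from automatic; it is rescued precisely by the assumption that $C$ is almost surely finite, which annihilates the boundary term at $+\infty$.

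With $\mathbb{E}[p(Y,\delta,X;\beta)]=\mathbb{E}[M(\beta,X)]=1$ in hand, the inequality follows at once: applying $\log u\le u-1$ (whose positive-part control also shows $\mathbb{E}[\log p]$ is well defined in $[-\infty,0]$) pointwise to $u=p(Y,\delta,X;\beta)$ and taking expectations yields
\[
\mathbb{E}[\log p(Y,\delta,X;\beta)]\le \mathbb{E}[p(Y,\delta,X;\beta)]-1=0,
\]
with equality if and only if $p(Y,\delta,X;\beta)=1$ almost surely. Finally, to upgrade to the strict inequality for $\beta\neq\beta_0$, I would argue by contraposition: equality forces $F_{Y,\delta}^\beta(\cdot,\cdot\mid x)=F_{Y,\delta}^{\beta_0}(\cdot,\cdot\mid x)$ for almost every $x$, and then the very computation carried out in the proof of Theorem \ref{th_ident}---cancelling the strictly positive factor $F_C((t,\infty)\mid x)$ for $t<\tau_H(x)$ (legitimate because (\ref{id_mass_infty_C}) gives $\tau_H(x)=\tau_{H_0}(x)$) and integrating over $\R$, using that $F_{T,0}^{\beta}$ and $F_{T,0}^{\beta_0}$ have total mass one at these two matching parameter values---delivers $\phi(\cdot,\beta)=\phi(\cdot,\beta_0)$ almost surely, whence $\beta=\beta_0$ by (\ref{model_inden}). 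Thus $\beta\neq\beta_0$ precludes equality and gives the strict inequality, which is the assertion.
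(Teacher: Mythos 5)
Your proof is correct, and at the top level it is the same argument as the paper's --- an information inequality giving $\mathbb{E}[\log p(Y,\delta,X;\beta)]\le \mathbb{E}[\log p(Y,\delta,X;\beta_0)]=0$, followed by the identifiability computation of Theorem \ref{th_ident} to make the inequality strict for $\beta\neq\beta_0$ --- but your implementation of the inequality step is genuinely different and, in fact, more self-contained. The paper conditions on $\{Y\in dt,\,X=x\}$ and appeals to the Bernoulli likelihood inequality for $\delta$, then integrates over $(Y,X)$; taken literally, that pointwise-in-$t$ bound also needs a Jensen step for the marginal law of $Y$, and that step is valid only because the model-induced law $F^\beta_{Y,\delta}(\cdot,\cdot\mid x)$ has total mass one for \emph{every} $\beta$ --- a fact the paper leaves implicit. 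You make exactly this fact the centerpiece: the integration-by-parts computation showing $M(\beta,x)=1$, where the potential impropriety of $F_{T,0}^\beta(\cdot\mid x)$ is neutralized by $\mathbb{P}(C=\infty\mid x)=0$ (condition (\ref{id_mass_infty_C})) and the shared-atom corrections vanish by (\ref{iden_cd2}); after that, the elementary bound $\log u\le u-1$ finishes the argument and simultaneously shows $\mathbb{E}[\log p]$ is well defined in $[-\infty,0]$. Your treatment of the equality case ($p\equiv1$ a.s.\ together with total mass one forces $F^\beta_{Y,\delta}=F^{\beta_0}_{Y,\delta}$, hence $\beta=\beta_0$ via Theorem \ref{th_ident} and (\ref{model_inden})) coincides with the paper's, which extracts only $p_1\equiv1$ before invoking Theorem \ref{th_ident}. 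One point you could state more explicitly: identifying $\int p_k\,dH_k$ with the total model mass uses the absolute continuity $F^\beta_{Y,\delta}(\cdot,k\mid x)\ll H_k(\cdot\mid x)$, but this is automatic because $F_{T,0}^\beta$ and $F_C$ are product integrals of hazard measures defined as ratios against $H_1$ and $H_0$ respectively --- which is, in substance, your remark about the atoms.
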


\section{General asymptotic results}\label{secc_4}

\setcounter{equation}{0}

Little assumptions were needed for our analysis so far.
To proceed further with the asymptotic results we need to be more specific with respect to several aspects. In order to prove consistency, we have to control the asymptotic behavior of $\widehat L_n (\beta)$ along sequences of values of the parameter $\beta.$ Such a control requires a control of denominators like
$$
\widehat H([t,\infty)\mid x) - (1-\phi(x,\beta)) \widehat F_C ([t,\infty)\mid x)
$$
on the support of $H_1(\cdot\mid x),$ uniformly with respect to $x.$
A usual way to deal with this technical difficulty is to consider a finite threshold $\tau$ beyond which no uncensored lifetime is observed, i.e.
\begin{equation}\label{hhhh_0}
 \inf_x H_1((-\infty,\tau]\mid x) =1.
\end{equation}
Moreover, to be able to keep denominators away from zero, we require the condition
\begin{equation}\label{hhhh}
\inf_{x \in\mathcal{X} } H_1(\{\tau\}\mid x) H_0((\tau,\infty)\mid x) >0.
\end{equation}
In particular, this condition implies
$$
\tau_{H_0}(x) > \tau_{H_1}(x) =\tau,\qquad x\in\mathcal{X}.
$$
Moreover, given condition (\ref{iden_cd2}), necessarily $H_0(\{\tau\}\mid x)=0$, $\forall x.$ This means that $F_C(\{\tau\}\mid x)=0$, $\forall x.$
This constraint on $H_0(\{\tau\}\mid x)$ could be relaxed at the expense of suitable adjustments of the inversion formulae. For simplicity, we keep condition (\ref{iden_cd2}). Let us also notice that condition (\ref{hhhh}) implies 
$
\inf_x F_{C} ((\tau,\infty)\mid x) >0,
$
and 
$
\inf_x F_{T,0}^\beta (\{\tau\}\mid x) >0
$, $\forall \beta.$

Conditions like in equations (\ref{hhhh_0})-(\ref{hhhh}) are more or less explicitly used in the literature of cure models. Sometimes $\tau$ is justified  as representing a total follow-up of the study. For instance, Lu (2008) supposes that $Y=\min\{ T,\min (C,\tau)\}$ and $\delta = \mathbf{1}\{ T\leq \min(C,\tau) \}, $ where $T= \eta T^* + (1-\eta) \infty,$ with $T^*<\infty$ and $\eta\in\{0,1\}.$  The conditional probability of being cured is precisely the conditional probability of the event $\{\eta = 0\}.$ Next, Lu (2008) supposes that
$
\inf _x \mathbb{P} (\tau \leq T \leq  C  \mid x ) >0,
$
and  $\Lambda_0(\tau)<\infty,$ where $\Lambda_0(\cdot)$ is the cumulative hazard function of $T^*.$ All these conditions together clearly imply our conditions  (\ref{hhhh_0})-(\ref{hhhh}).

Fang \emph{et al.} (2005) implicitly restrict the uncensored lifetimes to some compact interval $[0,\tau]$ and suppose
$
\mathbb{E}(\delta \mathbf{1}\{Y\geq \tau\}) >0.
$
This could be possible only if $H_{1}(\{\tau\}\mid x)>0$ for a set of values $x$ with positive probability. In a proportional hazards context with the covariates taking values in a bounded set, as assumed by Fang \emph{et al.} (2005), this is equivalent to $ H_{1}(\{\tau\}\mid x)\geq c >0$ for almost all $x,$ for some constant $c.$

The fact that technical conditions similar to our conditions (\ref{hhhh_0})-(\ref{hhhh}) could be traced in the  cure models literature is not unexpected in view of our Section \ref{sec_inter}. Indeed, the existing approaches could be interpreted through our inversion formulae and thus the technical problems we face 
in the asymptotic investigation are expected to be also present in the alternative approaches.

\subsection{Consistency} \label{consistency}

Let us sketch the arguments we use in the proof of Theorem \ref{consist_prop} below for deriving the consistency of $\widehat \beta$.
On one hand, if the conditional subdistributions $H_k(\cdot\mid x)$ are given, one can build the purely parametric likelihood
\begin{equation}\label{lcal_n}
\mathcal{L}_n(\beta)=\prod_{i=1}^n \!\Big\{ \phi(X_i, \beta) F_{T,0}^\beta(\{Y_i\}\mid X_i) \Big\} ^{\delta_i} \!\Big\{ \phi(X_i,\beta) F_{T,0}^\beta ((Y_i,\infty)\mid X_i) + 1 - \phi(X_i,\beta) \Big\}^{1-\delta_i}\!\!.
\end{equation}
By construction, $\mathcal{L}_n(\beta)$  is a functional of $H_0(\cdot\mid x)$ and $H_1(\cdot \mid x),$ $x\in\mathcal{X},$ while $\widehat L_n(\beta)$ is a functional of the estimated versions of $H_0(\cdot\mid x)$ and $H_1(\cdot \mid x).$ Hence, a prerequisite condition for deriving the consistency of our semiparametric estimator $\widehat \beta$ is the consistency of the infeasible maximum likelihood estimator
$$
\widetilde \beta = \arg\max_{\beta\in B} \log \mathcal{L}_n(\beta).
$$
A necessary condition for the consistency of $\widehat \beta$ is 
\begin{equation}\label{consis1}
\sup_{\beta\in B}|\log \widehat {L}_n(\beta) - \log \mathcal{L}_n(\beta)| = o_{\mathbb{P}}(1).
\end{equation}
We then have that
$$
\log \mathcal{L}_n(\widehat \beta) \geq \log \mathcal{L}_n(\widetilde \beta) - o_{\mathbb{P}}(1).
$$
From this we will derive the consistency of $\widehat \beta$ using Section 5.2 in van der Vaart (1998).  See the proof in the Appendix for details. 

To prove condition (\ref{consis1}), we have to guarantee the uniform convergence of $\widehat H_k-H_k$ as stated in Assumption (AC1) below.  
Indeed, this uniform convergence will imply 
\begin{equation}\label{unif2}
\sup_{x\in \mathcal{X}} \sup_{t\in (-\infty,\tau]} | \widehat F_C([t,\infty)\mid x) - F_C([t,\infty)\mid x) | = o_{\mathbb{P}}(1),
\end{equation}
and 
\begin{equation}\label{unif3}
\sup_{\beta\in B} \; \sup_{x\in \mathcal{X}}\; \sup_{t\in (-\infty,\tau]} | \widehat  F_{T,0}^\beta([t,\infty)\mid x) - F_{T,0}^\beta([t,\infty)\mid x) | = o_{\mathbb{P}}(1).
\end{equation}
See Lemma \ref{F_C_et_al} in the Appendix. The uniform 
convergence in equation (\ref{consis1}) then 
follows.

To prove the consistency of $\widehat \beta$, we need the following assumptions :
\begin{itemize}
\item[(AC1)] For $\tau$ appearing in conditions (\ref{hhhh_0})-(\ref{hhhh}),
\begin{equation*}
 \sup_{x\in \mathcal{X}} \sup_{t\in (-\infty,\tau]} | \widehat H_k([t,\infty)\mid x) - H_k([t,\infty)\mid x) | = o_{\mathbb{P}}(1),\qquad k\in\{0,1\}.
\end{equation*}

\item[(AC2)] The parameter set $B\subset\mathbb{R}^p$ is compact.

\item[(AC3)] There exist some constants $a>0$ and $c_1>0$ such that
$$
\left| \phi(x,\beta) - \phi(x,\beta^\prime) \right| \leq c_1 \| \beta - \beta^\prime \|^a,\qquad \forall\beta,\beta^\prime\in B,\;\forall x,x^\prime \in\mathcal{X}.
$$

\item[(AC4)] $\inf_{\beta \in B} \inf_{x \in \mathcal{X}} \phi(x,\beta) > 0$. 

\end{itemize}

Now we can state our consistency result.

\begin{theor}\label{consist_prop}
Assume that (AC1)-(AC4) and (\ref{iden_cd1}), (\ref{iden_cd2}), (\ref{iden_cd3}), (\ref{model_inden}), (\ref{hhhh_0}) and (\ref{hhhh}) hold true. Moreover, assume that there exists a unique value $\beta_0$ in the  parameter space $B$ such that (\ref{def_beta_0}) is true.
Then,
$
\widehat\beta - \beta_0= o_{\mathbb{P}}(1).
$
\end{theor}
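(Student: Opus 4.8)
\emph{The plan is to transfer the asymptotics of the infeasible estimator $\widetilde\beta$, built from the true subdistributions $H_0(\cdot\mid x),H_1(\cdot\mid x)$, to the feasible estimator $\widehat\beta$, built from their estimates.} The starting observation is that, up to additive terms not depending on $\beta$, one has $\log\mathcal{L}_n(\beta)=\sum_{i=1}^n\log p(Y_i,\delta_i,X_i;\beta)$, with $\log p$ as in (\ref{bernoulli_lik})--(\ref{p_bernoulli2}): this follows by factoring the jump $F_{T,0}^\beta(\{Y_i\}\mid X_i)$ and the survival $F_{T,0}^\beta((Y_i,\infty)\mid X_i)$ through $p_1$ and $p_0$, the leftover $H_k(dY_i\mid X_i)$ and $F_C$ factors being free of $\beta$. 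Hence $\widetilde\beta$ equivalently maximizes $M_n(\beta):=n^{-1}\sum_{i=1}^n\log p(Y_i,\delta_i,X_i;\beta)$, whose pointwise limit is $M(\beta):=\mathbb{E}[\log p(Y,\delta,X;\beta)]$.

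First I would prove $\widetilde\beta-\beta_0=o_{\mathbb{P}}(1)$ by the standard argmax route of Section~5.2 in van der Vaart (1998). Proposition \ref{ident_lik} already supplies the well-separated maximum, since $\beta_0$ is the \emph{unique} maximizer of $M$; combined with the compactness of $B$ in (AC2) and the continuity of $M$ this gives $\sup_{\|\beta-\beta_0\|\ge\varepsilon}M(\beta)<M(\beta_0)$ for every $\varepsilon>0$. It remains to establish the uniform law of large numbers $\sup_{\beta\in B}|M_n(\beta)-M(\beta)|=o_{\mathbb{P}}(1)$. Here (AC3) yields the Hölder continuity of $\beta\mapsto\phi(x,\beta)$, hence of $\beta\mapsto\log p(Y,\delta,X;\beta)$, uniformly in the data, while (AC4) together with the threshold conditions (\ref{hhhh_0})--(\ref{hhhh}) keeps $\phi$ and the denominators defining $F_{T,0}^\beta$ bounded away from zero on $(-\infty,\tau]$, so that $\log p$ admits an integrable, $\beta$-free envelope; an equicontinuity argument over the compact set $B$ then gives the uniform convergence, and the cited theorem yields the consistency of $\widetilde\beta$.

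The crux is then the uniform comparison (\ref{consis1}), $\sup_{\beta\in B}|\log\widehat L_n(\beta)-\log\mathcal{L}_n(\beta)|=o_{\mathbb{P}}(1)$. Writing each log-contribution through the hatted analogues of $p_1$ and $p_0$, the difference is governed by $\widehat F_C([t,\infty)\mid x)-F_C([t,\infty)\mid x)$ and $\widehat F_{T,0}^\beta([t,\infty)\mid x)-F_{T,0}^\beta([t,\infty)\mid x)$, which I would obtain from (AC1) via the continuity of the product-integral map of Gill and Johansen (1990): the uniform convergence of $\widehat H_k$ in (AC1) propagates through (\ref{classical_KM})--(\ref{inv_FT_1}) to (\ref{unif2}) and through (\ref{inv_LamT_beta})--(\ref{inv_FT_beta}) to (\ref{unif3}), which is precisely Lemma \ref{F_C_et_al}. \emph{The main obstacle lies here}: the denominator $\widehat H([t,\infty)\mid x)-[1-\phi(x,\beta)]\widehat F_C([t,\infty)\mid x)$ must be bounded away from zero \emph{simultaneously} in $t\le\tau$, in $x\in\mathcal{X}$ and in $\beta\in B$, for otherwise neither the logarithms nor the product-integrals are continuous in the data. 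Conditions (\ref{hhhh_0})--(\ref{hhhh}) are designed for this: they force $\tau_{H_0}(x)>\tau_{H_1}(x)=\tau$ and $\inf_x F_C((\tau,\infty)\mid x)>0$, so the population denominator is bounded below, and (AC1) together with (\ref{unif2}) transfers this lower bound to the estimated denominator with probability tending to one, uniformly in $\beta$ by (AC4). Handling the product-integral uniformly in the extra index $\beta$ is the delicate content of Lemma \ref{F_C_et_al}.

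Finally, combining the two pieces, on the event where (\ref{consis1}) holds one has $\log\mathcal{L}_n(\widehat\beta)\ge\log\widehat L_n(\widehat\beta)-o_{\mathbb{P}}(1)\ge\log\widehat L_n(\widetilde\beta)-o_{\mathbb{P}}(1)\ge\log\mathcal{L}_n(\widetilde\beta)-o_{\mathbb{P}}(1)$, where the middle inequality uses that $\widehat\beta$ maximizes $\log\widehat L_n$. Thus $\widehat\beta$ is a near-maximizer of the infeasible criterion, equivalently $M_n(\widehat\beta)\ge M_n(\widetilde\beta)-o_{\mathbb{P}}(1)$, and feeding this into the same argmax argument — with the uniform convergence of $M_n$ to $M$ and the well-separated maximum at $\beta_0$ established above — yields $\widehat\beta-\beta_0=o_{\mathbb{P}}(1)$, as claimed.
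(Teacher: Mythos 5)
Your proposal is correct and follows essentially the same route as the paper: the paper's appendix proof likewise reduces both likelihoods to their $\beta$-dependent parts (its functions $q_i$, which drop the $\beta$-free jump terms $\delta_i\log \widehat H_1(\{Y_i\}\mid X_i)$), uses Lemma \ref{F_C_et_al} for the feasible-versus-infeasible comparison, a Glivenko--Cantelli argument based on the H\"older condition (AC3) for the uniform law of large numbers, Proposition \ref{ident_lik} for the unique (hence, by compactness and continuity, well-separated) maximizer, and the argmax framework of Section 5.2 in van der Vaart (1998). The only differences are cosmetic --- you detour explicitly through the infeasible estimator $\widetilde\beta$, as in the paper's Section \ref{consistency} sketch, while the appendix proof chains the two uniform convergences directly onto $\widehat\beta$ --- and your comparison (\ref{consis1}) should indeed be read, as you indicate at the outset, modulo the $\beta$-free terms $\sum_i\delta_i\log\bigl[\widehat H_1(\{Y_i\}\mid X_i)/H_1(\{Y_i\}\mid X_i)\bigr]$, which are not $o_{\mathbb{P}}(1)$ themselves but cancel in the argmax comparison, exactly as in the paper's definition of $q_i$.
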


Let us point out that the consistency result is stated in terms of the subdistributions of the observations and the conditional probability model $\{\phi(x,\cdot): \beta \in B\}.$ If the identification assumptions used in Proposition \ref{ident_lik} hold true and the model is correctly specified, $\phi(\widehat\beta,x)$ consistently estimates the cure probability $\mathbb{P}(T=\infty\mid x)$ for all $x$ in the support of $X.$  Let us also notice that condition (AC3) guarantees the Glivenko-Cantelli property for certain classes of functions.  It could be significantly weakened, but in the applications our condition (AC3) will cover the common modeling situations.  Condition (AC4) is a weak condition on the 
model $\phi(x,\beta)$ and is e.g.\ satisfied for the logistic model if $\mathcal{X}$ and $B$ are compact. 


%
%
%

\subsection{Asymptotic normality}

For the asymptotic normality we will use the approach in Chen \emph{et al.}  (2003). For this purpose we use the derivative of $\log \widehat L_n(\beta)$ with respect to $\beta$.

%

First note that the vector of partial derivatives of the log-likelihood $\log \widehat L_n(\beta)$ with respect to the components of $\beta$ equals
\begin{eqnarray*}
\nabla_\beta \log \widehat L_n(\beta) &=& \frac{1}{n} \sum_{i=1}^n \nabla_\beta \Big\{ \frac{}{} \delta_i \log \phi(X_i,\beta) + \delta_i \log[ \widehat H_1(\{Y_i\}\mid X_i)] \\
&& - \delta_i\log\left[ \widehat H([Y_i,\infty)\mid X_i) - (1-\phi(X_i,\beta)) \widehat F_C([Y_i,\infty)\mid X_i) \right] \\
&& + \delta_i \log \widehat F_{T,\beta}^{(1)} ([Y_i,\infty)\mid X_i) 
\\
&& + (1-\delta_i) \log \left[\phi(X_i,\beta) \widehat F_{T,\beta}^{(1)} ([Y_i,\infty)\mid X_i) + 1 - \phi(X_i,\beta)  \right]
\Big\}.\\
&=& \frac{1}{n} \sum_{i=1}^n \nabla_\beta q_i(\beta, \widehat H_0, \widehat H_1),
\end{eqnarray*}
where $q_i$ is defined in the proof of Theorem \ref{consist_prop}.

To develop the asymptotic normality of our estimator $\widehat \beta$, we embed the nuisance functions $H_k([\cdot,\infty) \mid \cdot)$ ($k=0,1$) in a functional space ${\cal H}$, which is equipped with a pseudo-norm $\|\cdot\|_{\cal H}$.  Both the space ${\cal H}$ and its pseudo-norm $\|\cdot\|_{\cal H}$ will be chosen depending on the estimators $\widehat H_k([\cdot,\infty) \mid \cdot)$, $k=0,1$, and have to satisfy certain conditions, which we give below.  The true vector of nuisance functions is
$$
\eta_0(t,x) = (\eta_{01}(t,x), \eta_{02}(t,x)) = (H_0([t,\infty)\mid x) , H_1([t,\infty)\mid x)).
$$
For each $x \in {\cal X}$ and for each $\eta \in {\cal H}$, let $\eta(dt,x) = (\eta_{1}(dt,x), \eta_{2}(dt,x))$ be the measures associated to the non-increasing functions
$\eta(\cdot,x) = (\eta_{1}(\cdot,x), \eta_{2}(\cdot,x))$, and define
\begin{equation}\label{m1}
M_n(\beta,\eta) = \frac{1}{n}\sum_{i=1}^n m(Y_i,\delta_i,X_i;\beta ,\eta)
\end{equation}
and
\begin{equation}\label{M}
M(\beta,\eta) = \mathbb{E}[m(Y,\delta,X;\beta ,\eta)],
\end{equation}
where
\begin{multline*}
m(t,\delta,x;\beta ,\eta) = \frac{\delta \nabla_\beta \phi(x,\beta)}{\phi(x,\beta)} - \frac{\delta \nabla_\beta \phi(x,\beta) T_2 (\eta)(t,x)}{T_1 (\beta,\eta)(t,x)} + \delta T_4 (\beta, \eta)(t,x) \\
+ (1-\delta) \frac{\nabla_\beta \phi(x,\beta) T_3 (\beta,\eta)(t,x) + \phi(x,\beta) (T_3T_4) (\beta,\eta)(t,x) - \nabla_\beta \phi(x,\beta)}{\phi(x,\beta)T_3 (\beta, \eta)(t,x) + 1 - \phi(x,\beta)},
\end{multline*}
where
\begin{eqnarray}
T_1 (\beta,\eta)(t,x) &=& \left[ \eta_{1}(t,x) +  \eta_{2}(t,x) \right] - (1-\phi(x,\beta)) T_2 (\beta,\eta)(t,x)\label{T_1}\\
T_2 (\beta,\eta)(t,x) &\equiv& T_2 (\eta)(t,x) = \prod_{-\infty<s<t}\left( 1 - \frac{\eta_1(ds,x)}{(\eta_1+\eta_2) (s,x)}  \right)  \notag\\
T_3 (\beta,\eta)(t,x) & = & \prod_{-\infty<s<t}\left( 1 - \frac{\eta_2 (ds,x)}{T_1 (\beta,\eta)(s,x)}  \right) \notag\\
T_4 (\beta,\eta)(t,x) & = &\nabla_\beta \log T_3 (\beta,\eta)(t,x)\notag\\
&&= -\nabla_\beta \phi(x,\beta) \int_{(-\infty,t)} \frac{T_2 (\eta)(s,x)\eta_2(ds,x)}{T_1 (\beta,\eta)(s,x)[T_1 (\beta,\eta)(s,x) - \eta_2(\{s\},x)]}. \notag
\end{eqnarray}
Note that for $\eta=\eta_0$, we have
$$T_1(\beta,\eta_0)(t,x) = H([t,\infty)\mid x) - (1-\phi(x,\beta)) F_C([t,\infty)\mid x) ,$$
$T_2(\eta_0)(t,x) = F_C([t,\infty)\mid x)$ and $T_3 (\beta,\eta_0)(t,x) = F_{T,0}^\beta([t,\infty)\mid x).$ Hence, we have that
$$ M(\beta_0,\eta_0)=0 \quad \mbox{ and } \quad \|M_n(\widehat \beta,\widehat \eta)\| = \mbox{inf}_{\beta \in B} \|M_n(\beta,\widehat \eta)\|, $$
where $\widehat \eta(t,x) = (\widehat\eta_1(t,x), \widehat\eta_2(t,x)) = (\widehat H_0([t,\infty)\mid x), \widehat H_1([t,\infty)\mid x))$.

If in addition $\beta=\beta_0$,
$$
T_1(\beta_0,\eta_0)(t,x) = F_{T,0}^{\beta_0}([t,\infty)\mid x) F_C([t,\infty)\mid x), \qquad t\in (-\infty,\tau],
$$
and thus for any $x$, the map $t\mapsto T_1(\beta_0,\eta_0)(t,x)$ is decreasing on $(-\infty,\tau]$. Moreover, by condition (\ref{hhhh}),
$$
\inf_{x\in\mathcal{X}} T_1(\beta_0,\eta_0)(\tau,x) > 0.
$$
We need this lower bound to be valid on a neighborhood around $\beta_0$. Hence, let us consider  $B_0$ a neighborhood of $\beta_0$ such that
\begin{eqnarray} \label{infinf}
\inf_{\beta\in B_0} \inf_{x\in\mathcal{X}} T_1(\beta,\eta_0)(\tau,x) > 0.
\end{eqnarray}
The existence of $B_0$ is guaranteed by condition (\ref{hhhh}) and the regularity of the function $\phi(\cdot,\cdot)$; see assumption (AN3) below that strengthens assumption (AC3). Finally, let us note that 
by construction for any $t \in (-\infty,\tau)$, $H(\{t\} \mid x)=H_0(\{t\} \mid x)+H_1(\{t\} \mid x)\geq (1-\phi(x,\beta)) F_C(\{t\}\mid x)+H_1(\{t\} \mid x)$ and thus 
\begin{multline*}
H([t,\infty)\mid x) - (1-\phi(x,\beta)) F_C([t,\infty)\mid x) - H_1(\{t\} \mid x) \\\geq H((t,\infty)\mid x) - (1-\phi(x,\beta)) F_C((t,\infty)\mid x).
\end{multline*}
Then, by the arguments guaranteeing the existence of a set $B_0$ as in equation (\ref{infinf}),
\begin{eqnarray} \label{infinfinf}
\inf_{\beta\in B_0} \inf_{x\in\mathcal{X}} \inf_{t\in(-\infty,\tau)} [T_1(\beta,\eta_0)(t,x) - H_1(\{t\} \mid x)] > 0.
\end{eqnarray}
Further, define the G\^ateaux derivative of $M(\beta,\eta_0)$ in the direction $[\eta-\eta_0]$ by
$$ \nabla_\eta M(\beta,\eta_0)[\eta-\eta_0] = \lim_{\tau \rightarrow 0} \tau^{-1} \Big[M(\beta,\eta_0+\tau(\eta-\eta_0)) - M(\beta,\eta_0) \Big], $$
and in a similar way the G\^ateaux derivatives $\nabla_\eta T_j(\beta,\eta_0)[\eta-\eta_0]$ are defined.

We need the following assumptions :

\begin{itemize}
\item[(AN1)]\label{an1} The matrix $\nabla_\beta M(\beta,\eta)$ exists for $\beta$ in a neighborhood $B_0$ of $\beta_0$ and is continuous in $\beta$ for $\beta=\beta_0$.  Moreover, $\nabla_\beta M(\beta_0,\eta_0)$ is non-singular.

\item[(AN2)]\label{an2} $H_k([\cdot,\infty) \mid \cdot) \in {\cal H}$ for $k=0,1$.

\item[(AN3)]\label{an3} The function $\beta \rightarrow \phi(x,\beta)$ is continuously differentiable for all $x \in {\cal X}$, and the derivative is bounded uniformly in $x \in {\cal X}$ and $\beta \in B_0$.   Moreover, $B_0$ is compact and $\beta_0$ belongs to the interior of $B_0$.

\item[(AN4)]\label{an4} For $k=0,1$,  the estimator $\widehat H_k([\cdot,\infty) \mid \cdot)$ satisfies the following :
\begin{itemize}
\item[$(i)$] $\mathbb P(\widehat H_k([\cdot,\infty) \mid \cdot) \in {\cal H}) \rightarrow 1$
\item[$(ii)$] $\|(\widehat H_k-H_k)([\cdot,\infty) \mid \cdot)\|_{\cal H} = o_{\mathbb P}(n^{-1/4})$
\item[$(iii)$] There exist functions $\Psi_1$ and $\Psi_2$, such that
\begin{eqnarray*}
&& \sum_{k=0}^1\mathbb{E}^*\left[ \psi_{1k}(Y,X) \int_{-\infty<u<Y} \psi_{2k} (u,X)  d\Big( (\widehat H_k-H_k) ([u,\infty)\mid X) \Big) \right] \\
&& \hspace*{2cm} =\frac{1}{n}\sum_{i=1}^n \Psi_1(Y_i,\delta_i,X_i) + R_{1n},
\end{eqnarray*}
and
\begin{eqnarray*}
&\m &\m \hspace*{-0.7cm}\sum_{k,\ell=0}^1\mathbb{E}^*\left[ \psi_{3k}(Y,X) \int_{-\infty<u<Y} \psi_{4k}(u,X) \psi_{5k} \Big((\widehat H_k-H_k) ([u,\infty)\mid X),X \Big) dH_\ell(u\mid X) \right] \\
&\m &\m \hspace*{2cm} =\frac{1}{n}\sum_{i=1}^n \Psi_2(Y_i,\delta_i,X_i) + R_{2n},
\end{eqnarray*}
where $\mathbb{E}^*$ denotes conditional expectation given the data $(Y_i,\delta_i,X_i),$ $1\leq i\leq n$, the functions $\psi_{jk}$ are defined in (\ref{defphijk}) in the Appendix, and where
$$
\mathbb{E}[ \Psi_\ell(Y,\delta,X)] = 0, \quad R_{\ell n}=o_{\mathbb{P}}(n^{-1/2})
$$
($j=1,\ldots,5; k=0,1; \ell=1,2$).
Note that the above expectations are conditionally on the sample and are taken with respect to the generic variables $Y,\delta,X$ which have the same law as the sample.
\end{itemize}

\item[(AN5)]\label{an5}  The class ${\cal H}$ satisfies $\int_0^\infty \sqrt{\log N(\varepsilon,{\cal H},\|\cdot\|_{\cal H})} d\varepsilon < \infty$, where $N(\varepsilon,{\cal H},\|\cdot\|_{\cal H})$ is the $\varepsilon$-covering number of the space ${\cal H}$ with respect to the norm $\|\cdot\|_{\cal H}$, i.e.\ the smallest number of balls of $\|\cdot\|_{\cal H}$-radius $\varepsilon$ needed to cover the space ${\cal H}$.
\end{itemize}

\smallskip

\begin{theor} \label{asno}
Assume that $\widehat \beta - \beta_0 = o_{\mathbb{P}}(1)$ and that (AN1)-(AN5) and (\ref{iden_cd1}), (\ref{iden_cd2}), (\ref{iden_cd3}), (\ref{model_inden}), (\ref{hhhh_0}) and (\ref{hhhh}) hold true.  Then,
$$ n^{1/2} \big(\widehat \beta - \beta_0 \big) \Rightarrow {\cal N}(0,\Omega), $$
where
$$ \Omega = \big\{\nabla_\beta M(\beta_0,\eta_0)\big\}^{-1} V \big\{\nabla_\beta M(\beta_0,\eta_0)\big\}^{-1} $$
and $V = \mbox{Var}\big(m(Y,\delta,X;\beta_0,\eta_0) + \Psi_1(Y,\delta,X) + \Psi_2(Y,\delta,X) \big)$.
\end{theor}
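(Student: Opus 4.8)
The plan is to apply the general asymptotic normality theorem for semiparametric $Z$-estimators with an infinite-dimensional nuisance parameter of Chen \emph{et al.} (2003), whose hypotheses are exactly the content of (AN1)--(AN5). Recall that $\widehat\beta$ is defined as the approximate zero of the estimating map $\beta\mapsto M_n(\beta,\widehat\eta)$, namely $\|M_n(\widehat\beta,\widehat\eta)\|=\inf_{\beta\in B}\|M_n(\beta,\widehat\eta)\|$, while the population version satisfies $M(\beta_0,\eta_0)=0$. The target of the whole argument is the asymptotically linear representation
\begin{equation*}
\sqrt{n}\,(\widehat\beta-\beta_0) = -\big\{\nabla_\beta M(\beta_0,\eta_0)\big\}^{-1}\,\frac{1}{\sqrt n}\sum_{i=1}^n\big\{ m(Y_i,\delta_i,X_i;\beta_0,\eta_0)+\Psi_1(Y_i,\delta_i,X_i)+\Psi_2(Y_i,\delta_i,X_i)\big\} + o_{\mathbb P}(1),
\end{equation*}
after which the multivariate central limit theorem delivers the stated Gaussian limit with the sandwich variance $\Omega$.

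First I would establish the \emph{stochastic equicontinuity} of the empirical process indexed by $(\beta,\eta)$. Writing $\nu_n(\beta,\eta)=\sqrt n\,[M_n(\beta,\eta)-M(\beta,\eta)]$, the aim is to show $\nu_n(\widehat\beta,\widehat\eta)-\nu_n(\beta_0,\eta_0)=o_{\mathbb P}(1)$. This is where (AN5) enters: the finite entropy integral $\int_0^\infty\sqrt{\log N(\varepsilon,\mathcal H,\|\cdot\|_{\mathcal H})}\,d\varepsilon<\infty$ makes the class $\{m(\cdot;\beta,\eta):\beta\in B_0,\ \eta\in\mathcal H\}$ Donsker, and together with the consistency $\widehat\beta-\beta_0=o_{\mathbb P}(1)$, the membership $\mathbb P(\widehat\eta\in\mathcal H)\to1$ from (AN4)(i), and the rate $\|\widehat\eta-\eta_0\|_{\mathcal H}=o_{\mathbb P}(n^{-1/4})$ from (AN4)(ii), the increment of the empirical process between the random argument $(\widehat\beta,\widehat\eta)$ and the fixed $(\beta_0,\eta_0)$ is asymptotically negligible. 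The lower bounds (\ref{infinf})--(\ref{infinfinf}), which keep the denominators $T_1$ and $T_1-H_1(\{\cdot\}\mid\cdot)$ away from zero on a neighbourhood of $\beta_0$, are precisely what render the map $(\beta,\eta)\mapsto m(\cdot;\beta,\eta)$, and in particular the product-integrals $T_2=F_C$ and $T_3=F_{T,0}^\beta$, well behaved on $B_0\times\mathcal H$, so that the Donsker property is available.

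Next I would expand the deterministic map $M(\widehat\beta,\widehat\eta)$ about $(\beta_0,\eta_0)$. By (AN1) the $\beta$-derivative exists and is continuous at $\beta_0$, producing the term $\nabla_\beta M(\beta_0,\eta_0)(\widehat\beta-\beta_0)$ up to an $o_{\mathbb P}(\|\widehat\beta-\beta_0\|)$ remainder. For the nuisance direction I would compute the G\^ateaux derivative $\nabla_\eta M(\beta_0,\eta_0)[\widehat\eta-\eta_0]$, differentiating through the product-integral functionals $T_2$ and $T_3$ by the Duhamel/Hadamard-differentiability calculus for product integration, so as to produce the integrals of the form $\int_{-\infty<u<Y}\psi_{2k}(u,X)\,d\big((\widehat H_k-H_k)([u,\infty)\mid X)\big)$ and $\int_{-\infty<u<Y}\psi_{4k}\psi_{5k}\big((\widehat H_k-H_k)([u,\infty)\mid X),X\big)\,dH_\ell(u\mid X)$ that appear in (AN4)(iii); that assumption then replaces this derivative by the i.i.d.\ average $n^{-1}\sum_i\{\Psi_1+\Psi_2\}(Y_i,\delta_i,X_i)$ up to $o_{\mathbb P}(n^{-1/2})$. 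The second-order remainder of the $\eta$-expansion is $O_{\mathbb P}(\|\widehat\eta-\eta_0\|_{\mathcal H}^2)=o_{\mathbb P}(n^{-1/2})$, again by (AN4)(ii).

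Finally I would assemble the pieces. Combining $M_n(\widehat\beta,\widehat\eta)=M(\widehat\beta,\widehat\eta)+n^{-1/2}\nu_n(\widehat\beta,\widehat\eta)$ with the equicontinuity step and the expansion, and using that the approximate-minimizer property together with the nonsingularity of $\nabla_\beta M(\beta_0,\eta_0)$ forces $\|M_n(\widehat\beta,\widehat\eta)\|=o_{\mathbb P}(n^{-1/2})$ (the infimum over $\beta$ is itself $o_{\mathbb P}(n^{-1/2})$, as seen by evaluating the norm at a one-step correction of $\beta_0$), one obtains
\begin{equation*}
0 = M_n(\beta_0,\eta_0) + \nabla_\beta M(\beta_0,\eta_0)(\widehat\beta-\beta_0) + \frac{1}{n}\sum_{i=1}^n\{\Psi_1+\Psi_2\}(Y_i,\delta_i,X_i) + o_{\mathbb P}(n^{-1/2}).
\end{equation*}
Since $M_n(\beta_0,\eta_0)=n^{-1}\sum_i m(Y_i,\delta_i,X_i;\beta_0,\eta_0)$ and $M(\beta_0,\eta_0)=0$, inverting $\nabla_\beta M(\beta_0,\eta_0)$ yields the displayed linear representation; the three summands are mean zero, because $\mathbb E[m(\cdot;\beta_0,\eta_0)]=M(\beta_0,\eta_0)=0$ and $\mathbb E[\Psi_\ell]=0$, and i.i.d., so the classical central limit theorem gives $n^{1/2}(\widehat\beta-\beta_0)\Rightarrow\mathcal N(0,\Omega)$ with $V=\mathrm{Var}(m+\Psi_1+\Psi_2)$. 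The step I expect to be the main obstacle is the nuisance direction: differentiating through the product-integral maps $T_2$ and $T_3$ and checking that the resulting linear functionals of $\widehat H_k-H_k$ admit the asymptotically linear i.i.d.\ expansion is the genuinely delicate analytic work, which is exactly what the high-level assumption (AN4)(iii) abstracts away; within the proof itself, verifying the Donsker property and the stochastic equicontinuity uniformly over $B_0\times\mathcal H$, while keeping the near-zero denominators controlled, is the remaining technical hurdle.
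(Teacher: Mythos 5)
Your proposal is correct and follows essentially the same route as the paper: the paper proves Theorem \ref{asno} by verifying the high-level conditions (2.1)--(2.6) of Theorem 2 in Chen \emph{et al.} (2003), and the ingredients you identify --- Duhamel-formula differentiation of the product-integral functionals $T_2$, $T_3$ to get the G\^ateaux derivatives (with the denominators controlled by (\ref{infinf})--(\ref{infinfinf})), assumption (AN4)(iii) to convert the nuisance-direction term into an i.i.d.\ average of $\Psi_1+\Psi_2$, and the entropy condition (AN5) for stochastic equicontinuity --- are exactly the content of that verification. The only cosmetic difference is that the paper cites the Chen--Linton--Van Keilegom theorem as a black box (using their Theorem 3 plus a Lipschitz bound on $m$ in $(\beta,\eta)$ for the equicontinuity condition), whereas you unfold its internal proof; the mathematical substance is the same.
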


\subsection{Bootstrap consistency} \label{sec_boot}

Although in principle one can use Theorem \ref{asno} above for making inference, the asymptotic variance $\Omega$ has a complicated structure, and the estimation of $\Omega$  would not only be cumbersome, but its precision for small samples could moreover be rather poor.
We continue this section by showing that a bootstrap procedure can be used to estimate the asymptotic variance of $\widehat \beta$, to approximate the whole distribution of $\widehat \beta$ or to construct confidence intervals or test hypotheses regarding $\beta_0$.

Here, we propose to use a naive bootstrap procedure, consisting in drawing triplets $(Y_i^*,\delta_i^*,X_i^*),$ $1\leq i\leq n,$ randomly with replacement from the  data $(Y_i,\delta_i,X_i),$ $1\leq i\leq n$.  Let $\widehat H_k^*$ be the same estimator as $\widehat H_k$ ($k=0,1$) but based on the bootstrap data, and for each $(\beta,\eta)$ let $M_n^*(\beta,\eta) = n^{-1} \sum_{i=1}^n m(Y_i^*,\delta_i^*,X_i^*;\beta,\eta)$.  Define the bootstrap estimator $\widehat \beta^*$ to be any sequence that satisfies
$$ \|M_n^*(\widehat \beta^*,\widehat \eta^*) - M_n(\widehat \beta,\widehat \eta)\| = \mbox{inf}_{\beta \in B} \|M_n^*(\beta,\widehat \eta^*) - M_n(\widehat \beta,\widehat \eta)\|, $$
where $\widehat \eta^*(t,x) = (\widehat\eta_1^*(t,x), \widehat\eta_2^*(t,x)) = (\widehat H_0^*([t,\infty)\mid x), \widehat H_1^*([t,\infty)\mid x))$.

The following result shows that the bootstrap works, in the sense that it allows to recover correctly the distribution of $n^{1/2}(\widehat \beta - \beta_0)$.


\begin{theor} \label{boot}
Assume that $\widehat \beta - \beta_0 = o_{\mathbb{P}}(1)$ and that (AN1)-(AN5) hold true.   Moreover, assume that $\nabla_\beta M(\beta,\eta)$ is continuous in $\eta$ (with respect to $\|\cdot\|_{\cal H}$) at $(\beta,\eta)=(\beta_0,\eta_0)$, and that (AN4) holds true with $\widehat H_k-H_k$ replaced by $\widehat H_k^*-\widehat H_k$ ($k=0,1$) in ${\mathbb P}^*$-probability.  Then,
$$ \sup_{u\in\mathbb{R}^p} \Big| {\mathbb{P}}^*\big(n^{1/2} (\widehat \beta^* - \widehat \beta) \le u \big) - {\mathbb{P}}\big(n^{1/2} (\widehat \beta - \beta_0) \le u \big) \Big| = o_{\mathbb{P}}(1), $$
where $\mathbb P^*$ denotes probability conditionally on the data $(Y_i,\delta_i,X_i)$, $i=1,\ldots,n$, and where the inequality sign means  the component-wise inequality for vectors.
\end{theor}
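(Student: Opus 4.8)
The plan is to transport, conditionally on the sample, the linearization underlying Theorem~\ref{asno} to the bootstrap world, following the template of Chen \emph{et al.} (2003), and then to conclude by a conditional central limit theorem together with the continuity of the Gaussian limit; here $o_{\mathbb{P}^*}(1)$ denotes a quantity tending to zero in $\mathbb{P}^*$-probability, in $\mathbb{P}$-probability. I would first establish the conditional consistency $\widehat\beta^*-\beta_0=o_{\mathbb{P}^*}(1)$ by the bootstrap counterpart of the argument of Theorem~\ref{consist_prop}: the assumed bootstrap form of (AN4) gives $\|(\widehat H_k^*-\widehat H_k)([\cdot,\infty)\mid\cdot)\|_{\cal H}=o_{\mathbb{P}^*}(n^{-1/4})$, which combined with (AN4)(ii) yields $\widehat\eta^*\to\eta_0$ in $\mathbb{P}^*$-probability; the near-minimizing property defining $\widehat\beta^*$, the conditional uniform convergence of $M_n^*(\cdot,\widehat\eta^*)$ to $M(\cdot,\eta_0)$, and the well-separatedness of the zero at $\beta_0$ (via the nonsingularity in (AN1)) then force $\widehat\beta^*\to\beta_0$.

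The heart of the proof is the linear expansion. The defining property of $\widehat\beta^*$ gives $M_n^*(\widehat\beta^*,\widehat\eta^*)-M_n(\widehat\beta,\widehat\eta)=o_{\mathbb{P}^*}(n^{-1/2})$, which I decompose exactly as
\begin{align*}
M_n^*(\widehat\beta^*,\widehat\eta^*) - M_n(\widehat\beta,\widehat\eta)
&= \big[M(\widehat\beta^*,\widehat\eta^*) - M(\widehat\beta,\widehat\eta)\big] + (M_n^* - M_n)(\beta_0,\eta_0) \\
&\quad + \big[(M_n - M)(\widehat\beta^*,\widehat\eta^*) - (M_n - M)(\widehat\beta,\widehat\eta)\big] \\
&\quad + \big[(M_n^* - M_n)(\widehat\beta^*,\widehat\eta^*) - (M_n^* - M_n)(\beta_0,\eta_0)\big].
\end{align*}
Since $M(\beta_0,\eta_0)=0$, expanding $M(\widehat\beta^*,\widehat\eta^*)$ and $M(\widehat\beta,\widehat\eta)$ around $(\beta_0,\eta_0)$ turns the first bracket into $\nabla_\beta M(\beta_0,\eta_0)(\widehat\beta^*-\widehat\beta)+\nabla_\eta M(\beta_0,\eta_0)[\widehat\eta^*-\widehat\eta]$, up to a term of order $\|\widehat\eta^*-\eta_0\|_{\cal H}^2+\|\widehat\eta-\eta_0\|_{\cal H}^2=o_{\mathbb{P}^*}(n^{-1/2})$, which is exactly where the $n^{-1/4}$ rates are used. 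The third bracket is $o_{\mathbb{P}}(n^{-1/2})$ by the stochastic equicontinuity already invoked for Theorem~\ref{asno}, and the fourth is $o_{\mathbb{P}^*}(n^{-1/2})$ by its bootstrap counterpart.

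Two empirical-process facts drive this. The class $\{m(\cdot;\beta,\eta):\beta\in B_0,\ \eta\in{\cal H}\}$ is Donsker by (AN3) and the entropy bound (AN5); Donsker classes being preserved under the exchangeable multinomial bootstrap weights, the process $n^{1/2}(M_n^*-M_n)$ is asymptotically $\|\cdot\|_{\cal H}$-equicontinuous conditionally on the data, which controls the fourth bracket above. The nuisance increment $\nabla_\eta M(\beta_0,\eta_0)[\widehat\eta^*-\widehat\eta]$ is linearized, through the bootstrap form of (AN4), into $(M_n^*-M_n)(\Psi_1+\Psi_2)+o_{\mathbb{P}^*}(n^{-1/2})$. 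Collecting all terms, inverting the nonsingular matrix $\nabla_\beta M(\beta_0,\eta_0)$ from (AN1), and using the assumed continuity of $\nabla_\beta M(\cdot,\cdot)$ in $\eta$ at $(\beta_0,\eta_0)$, I obtain the conditional representation
$$
n^{1/2}(\widehat\beta^* - \widehat\beta) = -\big\{\nabla_\beta M(\beta_0,\eta_0)\big\}^{-1}\, n^{1/2}(M_n^* - M_n)\big(m(\cdot;\beta_0,\eta_0) + \Psi_1 + \Psi_2\big) + o_{\mathbb{P}^*}(1),
$$
which is the exact bootstrap analogue of the representation behind Theorem~\ref{asno}.

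To finish, the naive (multinomial) bootstrap central limit theorem applied to the mean-zero, square-integrable summand $m(\cdot;\beta_0,\eta_0)+\Psi_1+\Psi_2$ shows that the right-hand side converges conditionally to ${\cal N}(0,V)$, in $\mathbb{P}$-probability, with the very same $V$ as in Theorem~\ref{asno}; the continuous mapping theorem then gives $n^{1/2}(\widehat\beta^*-\widehat\beta)\Rightarrow{\cal N}(0,\Omega)$ conditionally, in $\mathbb{P}$-probability. As Theorem~\ref{asno} gives the same Gaussian limit for $n^{1/2}(\widehat\beta-\beta_0)$ and this limit has a continuous distribution function, P\'olya's theorem upgrades the conditional weak convergence to the uniform closeness of the two distribution functions claimed in the statement. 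I expect the main obstacle to be the simultaneous conditional control of the bootstrap equicontinuity and the bootstrap nuisance linearization: one must verify that resampling the triplets $(Y_i,\delta_i,X_i)$ and recomputing $\widehat\eta^*$ leaves the $o_{\mathbb{P}^*}(n^{-1/2})$ remainders intact, i.e.\ that the interaction between the resampling randomness and the nonparametric re-estimation of $\eta$ is negligible uniformly over the shrinking neighbourhood of $(\beta_0,\eta_0)$; this is precisely what the bootstrap form of (AN4) and the Donsker preservation are designed to furnish, and establishing them rigorously in $\mathbb{P}^*$-probability is the delicate part.
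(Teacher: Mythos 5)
Your proposal is correct and follows essentially the same route as the paper: the paper's proof simply verifies the high-level conditions of Theorem B in Chen \emph{et al.} (2003) --- condition (2.2) via (AN1) and the added continuity in $\eta$, condition (2.3) via the proof of Theorem \ref{asno}, conditions (2.4B) and (2.6B) via the bootstrap form of (AN4), and condition (2.5'B) via their Theorem 3 --- and then invokes that theorem as a black box. What you have written out (the four-term decomposition of $M_n^*(\widehat\beta^*,\widehat\eta^*)-M_n(\widehat\beta,\widehat\eta)$, bootstrap stochastic equicontinuity via the Donsker property from (AN5), the nuisance linearization via the bootstrap version of (AN4), and the conditional CLT plus P\'olya's theorem) is precisely the content of that cited theorem, so the two proofs coincide in substance.
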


\subsection{Verification of the assumptions for kernel estimators} \label{kernel}

We finish this section with an illustration of the verification of the assumptions of our asymptotic results when the conditional subdistributions $H_k$ are estimated by means of kernel smoothing.

Consider the case where $X$ is composed of continuous and discrete components, that is $X = (X_c,X_d)\in\mathcal{X}_c\times \mathcal{X}_d\subset \mathbb{R}^{d_c}\times \mathbb{R}^{d_d},$ with $d_c+d_d  =d\geq 1.$ For simplicity, assume that the support $\mathcal{X}_d$ of the discrete subvector $X_d$ is finite.  We also assume that the life time $T$ has not been transformed by a logarithmic or other transformation, so that its support is $[0,\infty]$. 
The subdistributions  $H_k([t,\infty) \mid x)$ could be estimated by means of a kernel estimator :
$$ \widehat H_k([t,\infty) \mid x) = \sum_{i=1}^n \frac{\widetilde K_{h_n}(X_i-x)}{\sum_{j=1}^n \widetilde K_{h_n}(X_j-x)} I(Y_i \ge t, \delta_i=k), $$
where for any $(x_c,x_d)\in \mathcal{X}_c\times \mathcal{X}_d,$
$$\widetilde K_{h_n}(X_i-x)  = K_{h_n} (X_{c,i}-x_c)  I(X_{d,i} = x_d), $$
$h_n$ is a bandwidth sequence, $K_h(\cdot) = K(\cdot/h)/h^{d_c}$, $K(u) = k(u_1) \cdot \ldots \cdot k(u_{d_c})$ and $k$ is a probability density function.

Nonparametric smoothing of continuous covariates is possible for dimensions $d_c$ larger than 1.  However, the technical arguments necessary to verify the assumptions used for the asymptotic results are tedious. Therefore, in the following we consider $d_c=1$. The discrete covariates do not contribute to the curse of dimensionality, and therefore $d_d$ could be larger than 1. However, for simplicity, below we do not consider discrete covariates. 

To satisfy assumption (AN4),  we need to impose the following conditions :
\begin{itemize}
\item[(C1)] The sequence $h_n$ satisfies $nh_n^4 \rightarrow 0$ and $nh_n^{3+\zeta} (\log n)^{-1} \rightarrow \infty$ for some $\zeta>0$.

\item[(C2)] The support ${\cal X}$ of $X$ is a compact subset of $\mathbb{R}$.

\item[(C3)] The probability density function $K$ has compact support, $\int u K(u) du = 0$ and $K$ is twice continuously differentiable.
\end{itemize}

Further, let ${\cal F}_1$ be the space of functions from $[0,\tau]$ to $[0,1]$  with variation bounded by $M$, and let ${\cal F}_2$ be the space of continuously differentiable functions $f$ from ${\cal X}$ to $[-M,M]$ that satisfy $\sup_{x \in {\cal X}}|f^{\prime}(x)| \le M$ and $\sup_{x_1,x_2 \in {\cal X}} |f^{\prime}(x_1) - f^{\prime}(x_2)| / |x_1-x_2|^\epsilon \le M$ for some $M < \infty$ and $0<\epsilon<1$.  Let
\begin{eqnarray*}
{\cal H} &=& \Big\{(t,x) \rightarrow \eta(t,x) : \eta(\cdot,x) \in {\cal F}_1, \, \frac{\partial}{\partial x} \eta(\cdot,x)  \in {\cal F}_1 \mbox{ for all } x \in {\cal X}, \\
&& \hspace*{3.2cm} \mbox{ and } \eta(t,\cdot) \in {\cal F}_2 \mbox{ for all } 0 \le t \le \tau\Big\}.
\end{eqnarray*}
We define the following norm associated with the space ${\cal H}$ : for $\eta \in {\cal H}$, let
$$ \|\eta\|_{\cal H} = \sup_{0 \le t \le \tau} \sup_{x \in {\cal X}} |\eta(t,x)|. $$

Then, it follows from Propositions 1 and 2 in Akritas and Van Keilegom (2001) that ${\mathbb P}(\widehat H_k \in {\cal H}) \rightarrow 1$ provided $nh_n^{3+\zeta} (\log n)^{-1} \rightarrow \infty$, with $\zeta>0$ as in condition (C1). Moreover, $\sup_{x,t} |\widehat H_k([t,\infty) \mid x) - H_k([t,\infty) \mid x)| = O_{\mathbb P}((nh_n)^{-1/2} (\log n)^{1/2}) = o_{\mathbb P}(n^{-1/4})$ (see Proposition 1 in Akritas and Van Keilegom 2001).  The class ${\cal H}$ satisfies assumption (AN5) thanks to Lemma 6.1 in Lopez (2011).   It remains to show the validity of assumption (AN4)$(iii)$.  We will show the first statement, the second one can be shown in a similar way.  Note that the left hand side equals
\begin{eqnarray*}
&& \sum_{k=0}^1 {\mathbb E} \Big[\psi_{1k}(Y,X) \int_{0<u<Y} \psi_{2k}(u,X) \frac{1}{nh} f_X^{-1}(X) \sum_{i=1}^n K_h(X_i-X) \, d\Big(I(Y_i \ge u, \delta_i=k) \\
&& \hspace*{1.5cm} - H_k([u,\infty) \mid X)\Big) \Big| Y_i,\delta_i,X_i\Big] + o_{\mathbb P}(n^{-1/2}) \\
&& = \sum_{k=0}^1 {\mathbb E} \Big[\psi_{1k}(Y,X) \frac{1}{nh} f_X^{-1}(X) \sum_{i=1}^n K_h(X_i-X) \Big\{- \psi_{2k}(Y_i,X) I(Y_i \le Y,\delta_i=k) \\
&& \hspace*{1.5cm} - \int_{0<u<Y} \psi_{2k}(u,X) \, dH_k([u,\infty) \mid X) \Big\} \Big| Y_i,\delta_i,X_i \Big] + o_{\mathbb P}(n^{-1/2})\\
&& = - \sum_{k=0}^1 n^{-1} \sum_{i=1}^n {\mathbb E} \Big[\psi_{1k}(Y,X_i) \Big\{\psi_{2k}(Y_i,X_i) I(Y_i \le Y,\delta_i=k) \\
&& \hspace*{1.5cm} - \int_{0<u<Y} \psi_{2k}(u,X_i) \, dH_k((-\infty,u] \mid X_i) \Big\} \Big| Y_i,\delta_i,X_i, X=X_i \Big]  + o_{\mathbb P}(n^{-1/2}+h^2),
\end{eqnarray*}
which is of the required form.

\section{Simulations}\label{sec_simul}

In this section we will investigate the small sample performance of our estimation method.   We consider the following model.  The covariate $X$ is generated from a uniform distribution on $[-1,1]$, and the conditional probability $\phi(x,\beta)$ of not being cured follows a logistic model~:
$$ \phi(x,\beta) = \frac{\exp(\beta_1+\beta_2x)}{1+\exp(\beta_1+\beta_2x)}, $$
for any $-1 \le x \le 1$.  We will work with $\beta_0=(\beta_{01},\beta_{02})=(1.75,2)$ and $(1.1,2)$, corresponding to an average cure rate of 20$\%$ respectively 30$\%$.  
The conditional distribution function $F_{T,0}(\cdot|x)$ of the uncured individuals is constructed as follows.   For a given $X$, we draw $T$ from an exponential distribution with mean equal to $\exp[-(\gamma_0+\gamma_1x+\gamma_2/(1+2x^2))]$, where $\gamma_0=\gamma_1=0.5$ and $\gamma_2 \in \{0,1,2\}$.  Next, in order to respect condition (\ref{hhhh}), we truncate this distribution at $\tau$, which is the quantile of order 0.97 of an exponential distribution with mean $\mathbb{E}\{\exp[-(\gamma_0+\gamma_1X+\gamma_2/(1+2X^2))]\}$, i.e.
$$ F_{T,0}([0,t]|x) = 1 - \exp[-\exp(\gamma_0+\gamma_1x+\gamma_2/(1+2x^2)) t] I(0 \le t \le \tau). $$
Note that this is the distribution function corresponding to a Cox model with baseline hazard equal to $I(0 \le t \le \tau)$, and exponential factor equal to $\exp(\gamma_0+\gamma_1x+\gamma_2/(1+2x^2))$.

Next, we generate the censoring variable $C$ independently of $X$ from an exponential distribution with mean equal to 1.65 when $\beta_0=(1.75,2)$, and with mean 1.45 when $\beta_0= (1.1,2)$.  In this way we have respectively 40$\%$ and 50$\%$ of censoring when $\gamma_2=0$.  

In what follows we will compare our estimator of $\beta$ with the estimator proposed by Lu (2008) which assumes a Cox model for the uncured individuals.  The exponential factor in the Cox model is assumed to be linear in the covariate $X$, and hence the Cox model will only be verified when $\gamma_2=0$.    The estimated $\beta$ coefficients under the Cox model are obtained using the \url{R} package \url{smcure}.

For our estimation procedure we used the kernel estimators given in Section \ref{kernel}, and  we programmed $\widehat\beta$ using the optimization procedure \url{optim} in \url{R}.  As starting values we used the estimator obtained from a logistic model based on the censoring indicator (as a surrogate for the unobserved cure indicator).   However, due to the non-concavity of our likelihood function and due to the inconsistency of this vector of starting values, the procedure \url{optim} often ends up in a local maximum instead of the global maximum.  To circumvent this problem, we added the following intermediate step to the estimation procedure.   Based on the initial starting values, we estimate $\beta$ from a logistic model based on the nonparametric estimator $\widehat F_T([0,\infty)|x)$, so we maximize the log-likelihood $\sum_{i=1}^n \{(1-\widehat F_T([0,\infty)|X_i)) \log(\phi(X_i,\beta)) + \widehat F_T([0,\infty)|X_i) \log(1-\phi(X_i,\beta))\}$.  Since this log-likelihood is concave it has a unique local and 
global maximum, 
expected to 
be close to the maximizer of our likelihood.   We now use this intermediate estimate as starting value for our likelihood maximization. 

The results of this two-step maximization procedure are given in Table \ref{table1} for the case where $\beta_0=(1.75,2)$, and in Table \ref{table2} for the case where $\beta_0=(1.1,2)$.  A total of 500 samples of size $n=150$ and $n=300$ are generated, and the tables show the bias and mean squared error (MSE) of the estimators $\widehat\beta_1$ and $\widehat\beta_2$ obtained under the Cox model and from our procedure.  The kernel function $K$ is taken equal to the Epanechnikov kernel : $K(u) = (3/4) (1-u^2) I(|u| \le 1)$.  The bandwidth $h$ of the kernel estimators $\widehat H_k(\cdot|X_i)$ ($k=0,1$) is taken proportional to $n^{-2/7}$ so as to verify regularity condition (C1), i.e.\ $h=cn^{-2/7}$ for several values of $c$, namely $c=2, 3$ and 4.  In addition, we also used the cross-validation (CV) procedure proposed by Li, Lin and Racine (2013) for kernel estimators of conditional distribution functions.  The CV procedure is implemented in the package \url{np} in \url{R}.  For each sample in our simulation, we  calculated these bandwidths for $\widehat H_0$ and $\widehat H_1$ and used the average of these two bandwidths. 

The tables show that our estimator outperforms the one that is based on the Cox model, even when the Cox model is correct.  They also show that our estimator is only mildly sensitive to the bandwidth, which could be explained by the fact that we average out the effect of the bandwidth.  We also see that the CV selection of the bandwidth is working rather well, in the sense that the MSE is close to the smallest value among the MSE's corresponding to the three fixed bandwidths. 

\begin{table}[!h]
\begin{center}
\begin{tabular}{c|c|c|cc|cc|cc|cc|cc}
\hline 
& & & \multicolumn{2}{c|}{$c=2$} & \multicolumn{2}{c|}{$c=3$} & \multicolumn{2}{c|}{$c=4$} & \multicolumn{2}{c|}{$h_{CV}$} & \multicolumn{2}{c}{Cox}\\
\cline{4-13}  
$n$ & $\gamma_2$ & Par.\ & Bias & MSE & Bias & MSE & Bias & MSE & Bias & MSE & Bias & MSE \\
\hline
150 & 0 & $\beta_1$ & .092 & .230 & .012 & .183 &-.075 & .162 & .044 & .224 & .216 & .523 \\
       &    & $\beta_2$ & .002 & .540 &-.231 & .438 &-.506 & .510 & -.158 & .536 & .291 & 1.15 \\
\hline
       & 1 & $\beta_1$ & .063 & .123 &-.021 & .101 &-.094 & .094 & .017 & .120 & .147 & .191 \\
       &    & $\beta_2$ &-.099 & .340 &-.334 & .331 &-.605 & .505 & -.246 & .397 & .278 & .635 \\ 
\hline
       & 2 & $\beta_1$ & .045 & .100 &-.029 & .086 &-.099 & .083 & -.005 & .101 & .124 & .145 \\
       &    & $\beta_2$ &-.109 & .242 &-.356 & .277 &-.632 & .497 & -.302 & .382 & .263 & .476  \\ 
\hline
300 & 0 & $\beta_1$ & .021 & .100 &-.029 & .088 &-.081 & .081 & .004 & .099 & .099 & .139 \\
       &    & $\beta_2$ &-.081 & .266 &-.252 & .268 &-.461 & .363 & -.148 & .288 & .135 & .365  \\
\hline
       & 1 & $\beta_1$ &-.004 & .060 &-.048 & .055 &-.097 & .055 & -.013 & .061 & .097 & .092 \\
       &    & $\beta_2$ &-.107 & .181 &-.278 & .208 &-.482 & .328 & -.150 & .201 & .215 & .302  \\
\hline
       & 2 & $\beta_1$ &-.015 & .050 &-.059 & .046 &-.107 & .049 & -.030 & .052 & .077 & .074  \\
       &    & $\beta_2$ &-.124 & .157 &-.295 & .198 &-.498 & .329 & -.197 & .217 & .181 & .247  \\     
\hline
\end{tabular} 
\vspace*{.3cm}
\caption{{\sl Bias and MSE of $\widehat \beta_1$ and $\widehat \beta_2$ for two sample sizes, three values of $\gamma_2$, three bandwidths of the form $h=cn^{-2/7}$ and the bandwidth $h_{CV}$ obtained from cross-validation. Here, $P(cured) = 0.2$ and $P(censoring) = 0.4$ for $\gamma_2=0$.  The Cox model is satisfied for $\gamma_2=0$.}} 
\label{table1}
\end{center}
\end{table}

\begin{table}[!h]
\begin{center}
\begin{tabular}{c|c|c|cc|cc|cc|cc|cc}
\hline 
& & & \multicolumn{2}{c|}{$c=2$} & \multicolumn{2}{c|}{$c=3$} & \multicolumn{2}{c|}{$c=4$} & \multicolumn{2}{c|}{$h_{CV}$} & \multicolumn{2}{c}{Cox}\\
\cline{4-13}  
$n$ & $\gamma_2$ & Par.\ & Bias & MSE & Bias & MSE & Bias & MSE & Bias & MSE & Bias & MSE \\
\hline
150 & 0 & $\beta_1$ & .082 & .139 & .003 & .116 &-.060 & .105 & .031 & .130 &.116 & .189 \\
       &    & $\beta_2$ &-.017 & .418 &-.244 & .365 &-.525 & .477 &-.217 & .419 & .215 & .618 \\ 
\hline
       & 1 & $\beta_1$ & .057 & .076 &-.011 & .063 &-.068 & .059 & .010 & .071 & .086 & .099 \\ 
       &    & $\beta_2$ &-.107 & .253 &-.328 & .277 &-.608 & .478 &-.294 & .352 & .227 & .434  \\ 
\hline
       & 2 & $\beta_1$ & .049 & .065 &-.016 & .056 &-.071 & .054 & .006 & .066 & .065 & .078 \\
       &    & $\beta_2$ &-.129 & .202 &-.361 & .260 &-.640 & .493 &-.329 & .358 & .196 & .329  \\ 
\hline
300 & 0 & $\beta_1$ & .059 & .074 &-.010 & .060 &-.058 & .056 & .031 & .073 & .060 & .083 \\
       &    & $\beta_2$ &-.068 & .216 &-.237 & .226 &-.453 & .326 &-.153 & .241 & .103 & .257  \\
\hline
       & 1 & $\beta_1$ &-.013 & .036 &-.043 & .034 &-.074 & .035 &-.024 & .035 & .050 & .047  \\
       &    & $\beta_2$ &-.117 & .135 &-.282 & .175 &-.486 & .306 &-.187 & .160 & .159 & .197  \\
\hline
       & 2 & $\beta_1$ &-.049 & .028 &-.080 & .030 &-.037 & .030 &-.034 & .030 & .035 & .037  \\
       &    & $\beta_2$ &-.295 & .168 &-.496 & .306 &-.244 & .194 &-.217 & .182 & .128 & .156 \\     
\hline
\end{tabular} 
\vspace*{.3cm}
\caption{{\sl Bias and MSE of $\widehat \beta_1$ and $\widehat \beta_2$ for two sample sizes, three values of $\gamma_2$, three bandwidths of the form $h=cn^{-2/7}$ and the bandwidth $h_{CV}$ obtained from cross-validation. Here, $P(cured) = 0.3$ and $P(censoring) = 0.5$ for $\gamma_2=0$.  The Cox model is satisfied for $\gamma_2=0$.}} 
\label{table2}
\end{center}
\end{table}

Next, we look at the estimation of the quartiles of the distribution $F_{T,0}(\cdot|x)$ when $x=0.25$.   We estimate these quartiles by means of our nonparametric estimator $\widehat F_{T,0}^{\widehat \beta}(\cdot|x)$ and by means of the Cox model studied in Lu (2008).   The results given in Tables \ref{table3} and \ref{table4} show that, as could be expected, when the Cox model is not satisfied (i.e.\ when $\gamma_2=1$ or 2), the MSE of the quartiles obtained under the Cox model is much higher than the corresponding MSE obtained from our procedure.  This shows the importance of having a model that does not impose any assumptions on the distribution of the uncured individuals and which still provides very accurate estimators for the logistic part of the model.

\begin{table}[!t]
\begin{center}
\vspace*{.3cm}
\begin{tabular}{c|c|c|cc|cc|cc|cc|cc}
\hline 
& & & \multicolumn{2}{c|}{$c=2$} & \multicolumn{2}{c|}{$c=3$} & \multicolumn{2}{c|}{$c=4$} & \multicolumn{2}{c|}{$h_{CV}$} & \multicolumn{2}{c}{Cox}\\
\cline{4-13}  
$n$ & $\gamma_2$ & $p$ & Bias & MSE & Bias & MSE & Bias & MSE & Bias & MSE & Bias & MSE \\
\hline
150 & 0 & $.25$ & .044 & .061 & .025 & .035 & .030 & .032 & .033 & .049 & .031 & .027 \\
       &    & $.50$ & .022 & .049 & .003 & .031 & .003 & .025 & .011 & .043 & .028 & .023 \\
       &    & $.75$ & .024 & .083 & .006 & .055 & .001 & .045 & .011 & .071 & .032 & .038 \\
\hline
       & 1 & $.25$ & .083 & .053 & .102 & .039 & .155 & .053 & .096 & .049 & .267 & .104 \\
       &    & $.50$ & .060 & .051 & .092 & .041 & .144 & .049 & .073 & .048 & .251 & .093 \\
       &    & $.75$ & .072 & .089 & .114 & .075 & .154 & .077 & .091 & .085 & .254 & .117 \\ 
\hline
       & 2 & $.25$ & .126 & .060 & .218 & .082 & .325 & .139 & .202 & .085 & .592 & .401 \\
       &    & $.50$ & .098 & .058 & .189 & .073 & .291 & .121 & .172 & .081 & .513 & .308 \\
       &    & $.75$ & .112 & .107 & .210 & .120 & .296 & .156 & .187 & .124 & .492 & .322 \\
\hline
300 & 0 & $.25$ & .007 & .032 &-.007 & .023 &-.018 & .018 & .002 & .030 & .010 & .013 \\  
       &    & $.50$ &-.001 & .034 &-.013 & .023 &-.026 & .018 &-.006 & .031 & .008 & .013  \\ 
       &    & $.75$ &-.003 & .053 &-.019 & .034 &-.030 & .026 &-.009 & .046 & .010 & .023  \\
\hline
       & 1 & $.25$ & .027 & .026 & .049 & .021 & .081 & .021 & .033 & .026 & .252 & .081 \\
       &    & $.50$ & .028 & .031 & .054 & .025 & .086 & .025 & .034 & .030 & .240 & .076 \\
       &    & $.75$ & .031 & .055 & .053 & .040 & .086 & .038 & .033 & .052 & .235 & .086 \\
\hline
       & 2 & $.25$ & .063 & .031 & .129 & .037 & .216 & .065 & .096 & .039 & .580 & .366 \\
       &    & $.50$ & .055 & .033 & .121 & .040 & .200 & .062 & .086 & .038 & .498 & .274 \\
       &    & $.75$ & .055 & .058 & .109 & .056 & .191 & .074 & .084 & .061 & .452 & .248 \\       
\hline
\end{tabular} 
\vspace*{.3cm}
\caption{{\sl Bias and MSE of the conditional quantiles of order $p=0.25, 0.50$ and $0.75$ at $x=0.25$ for two sample sizes, three bandwidths of the form $h=cn^{-2/7}$ and the bandwidth $h_{CV}$ obtained from cross-validation. Here, $P(cured) = 0.2$ and $P(censoring) = 0.4$ for $\gamma_2=0$.  The Cox model is satisfied for $\gamma_2=0$.}} 
\label{table3}
\end{center}
\end{table}

\begin{table}[!h]
\begin{center}
\vspace*{.3cm}
\begin{tabular}{c|c|c|cc|cc|cc|cc|cc}
\hline 
& & & \multicolumn{2}{c|}{$c=2$} & \multicolumn{2}{c|}{$c=3$} & \multicolumn{2}{c|}{$c=4$} & \multicolumn{2}{c|}{$h_{CV}$} & \multicolumn{2}{c}{Cox}\\
\cline{4-13}  
$n$ & $\gamma_2$ & $p$ & Bias & MSE & Bias & MSE & Bias & MSE & Bias & MSE & Bias & MSE \\
\hline
150 & 0 & $.25$ & .074 & .095 & .020 & .049 & .021 & .044 & .046 & .071 & .039 & .038 \\
       &    & $.50$ & .049 & .069 & .005 & .041 &-.003 & .031 & .023 & .057 & .031 & .029 \\
       &    & $.75$ & .042 & .098 & .007 & .063 &-.005 & .050 & .024 & .084 & .034 & .046 \\
\hline
       & 1 & $.25$ & .107 & .072 & .116 & .053 & .151 & .059 & .119 & .066 & .274 & .116 \\
       &    & $.50$ & .076 & .060 & .103 & .049 & .142 & .054 & .088 & .057 & .255 & .100 \\
       &    & $.75$ & .099 & .113 & .118 & .083 & .154 & .086 & .109 & .102 & .257 & .126 \\ 
\hline
       & 2 & $.25$ & .159 & .085 & .232 & .097 & .322 & .144 & .233 & .109 & .586 & .402 \\
       &    & $.50$ & .120 & .066 & .211 & .089 & .296 & .131 & .195 & .092 & .508 & .307 \\
       &    & $.75$ & .143 & .135 & .228 & .142 & .308 & .178 & .219 & .153 & .489 & .325 \\
\hline
300 & 0 & $.25$ & .032 & .042 &-.005 & .029 &-.029 & .023 & .016 & .038 & .018 & .017 \\  
       &    & $.50$ & .018 & .039 &-.011 & .026 &-.035 & .021 & .007 & .034 & .013 & .016 \\ 
       &    & $.75$ & .015 & .060 &-.018 & .037 &-.037 & .028 &-.001 & .054 & .016 & .022 \\
\hline
       & 1 & $.25$ & .026 & .033 & .046 & .025 & .071 & .023 & .034 & .031 & .252 & .083 \\
       &    & $.50$ & .030 & .036 & .052 & .027 & .080 & .026 & .037 & .034 & .234 & .074 \\
       &    & $.75$ & .033 & .060 & .050 & .042 & .081 & .040 & .038 & .055 & .232 & .087 \\
\hline
       & 2 & $.25$ & .070 & .040 & .132 & .042 & .212 & .064 & .103 & .046 & .570 & .357 \\
       &    & $.50$ & .060 & .038 & .125 & .044 & .201 & .064 & .089 & .043 & .492 & .269  \\
       &    & $.75$ & .062 & .066 & .115 & .061 & .193 & .078 & .090 & .070 & .451 & .251 \\       
\hline
\end{tabular} 
\vspace*{.3cm}
\caption{{\sl Bias and MSE of the conditional quantiles of order $p=0.25, 0.50$ and $0.75$ at $x=0.25$ for two sample sizes, three bandwidths of the form $h=cn^{-2/7}$ and the bandwidth $h_{CV}$ obtained from cross-validation. Here, $P(cured) = 0.3$ and $P(censoring) = 0.5$ for $\gamma_2=0$.  The Cox model is satisfied for $\gamma_2=0$.}} 
\label{table4}
\end{center}
\end{table}

We also verify how close the distributions of $\widehat\beta_1$ and $\widehat\beta_2$ are to a normal distribution.  We know thanks to Theorem \ref{asno} that the estimators converge to a normal limit when $n$ tends to infinity.  Figure \ref{fig1} shows that for $n=150$ the distribution is rather close to a normal limit, especially for $\widehat\beta_1$.   The figure is based on 1000 samples generated from the above model with $P(cured)=0.2$ and $P(censoring)=0.4$.  The results for $n=300$ (not shown here for space constraints) are close to a straight line, showing that the results improve when $n$ increases.

\begin{figure}[!h]
\begin{center}
\scalebox{0.45}{\includegraphics{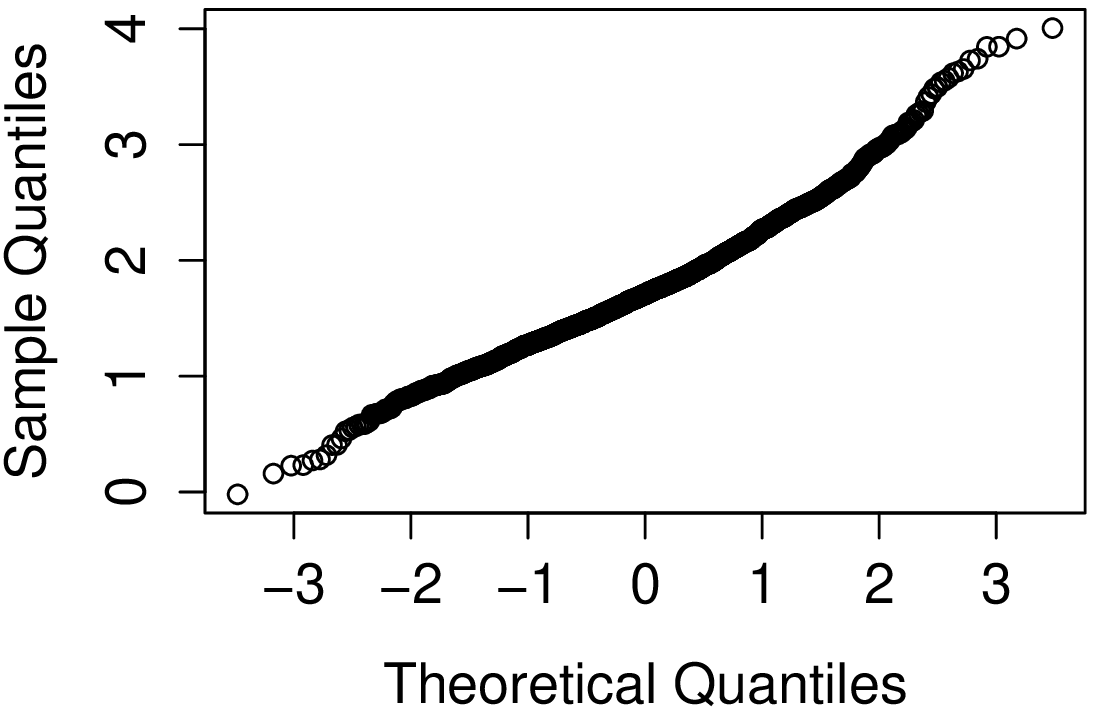}}\;\scalebox{0.45}{\includegraphics{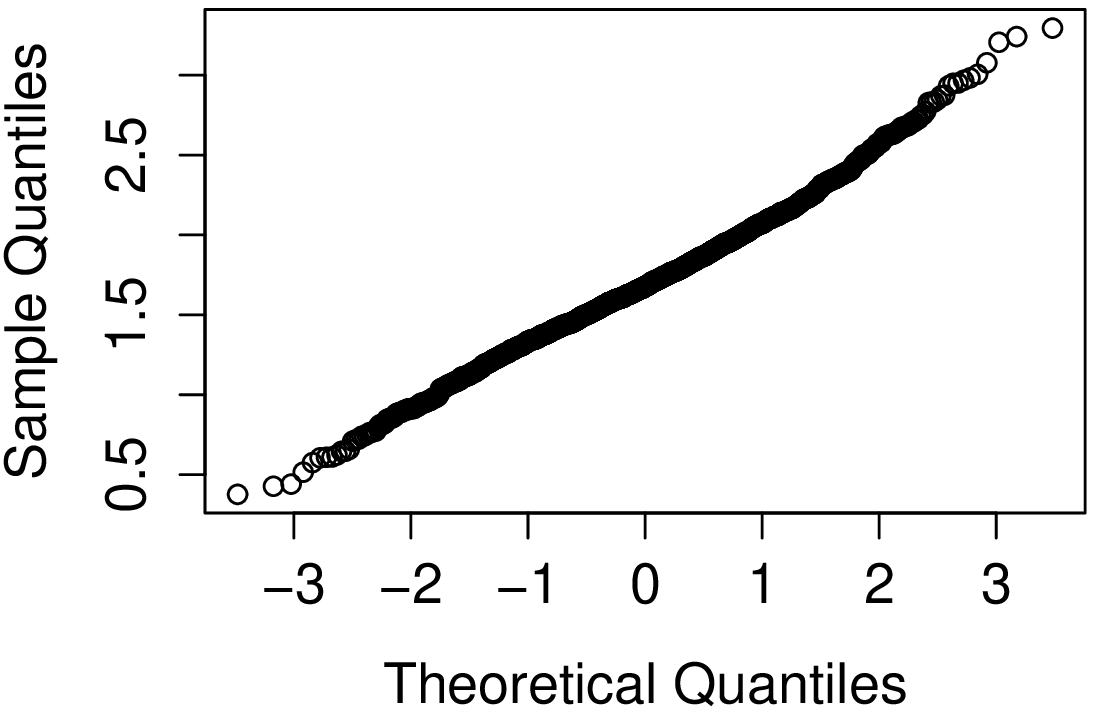}}\;\scalebox{0.45}{\includegraphics{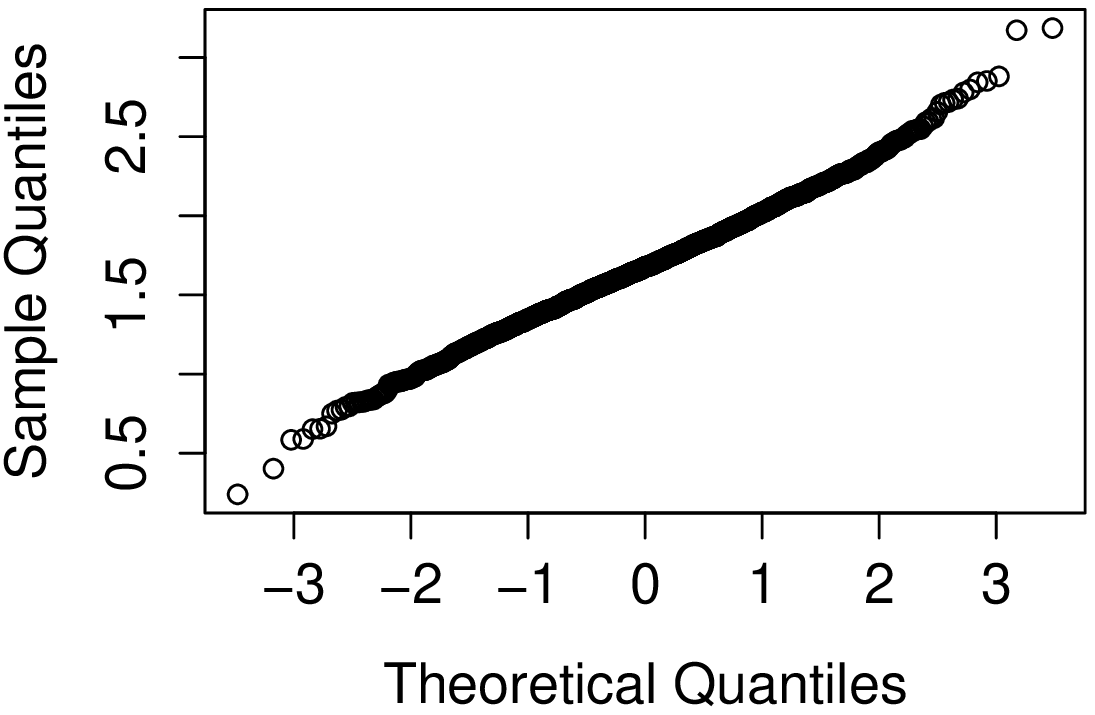}} \\
\scalebox{0.45}{\includegraphics{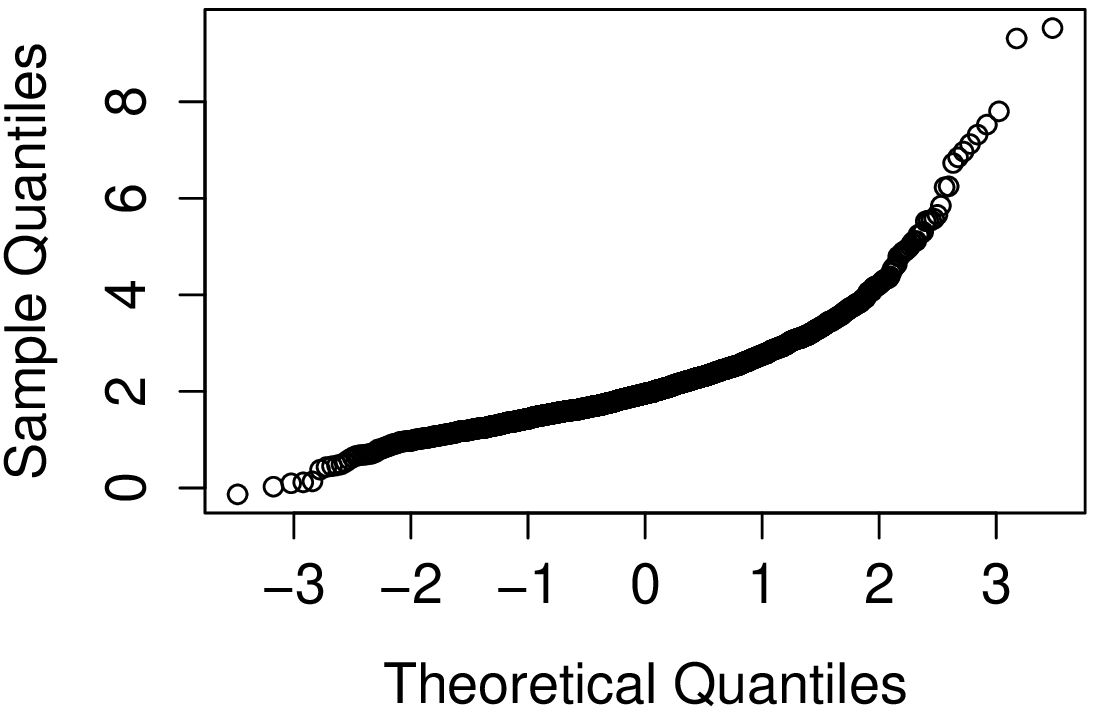}}\;\scalebox{0.45}{\includegraphics{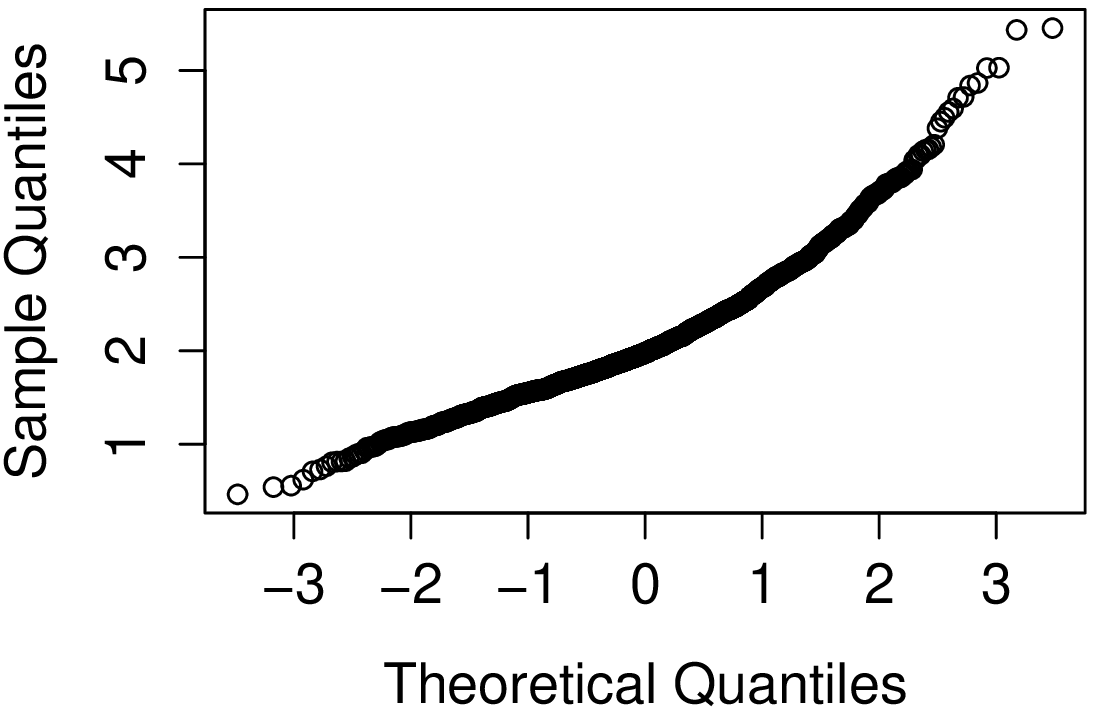}}\;\scalebox{0.45}{\includegraphics{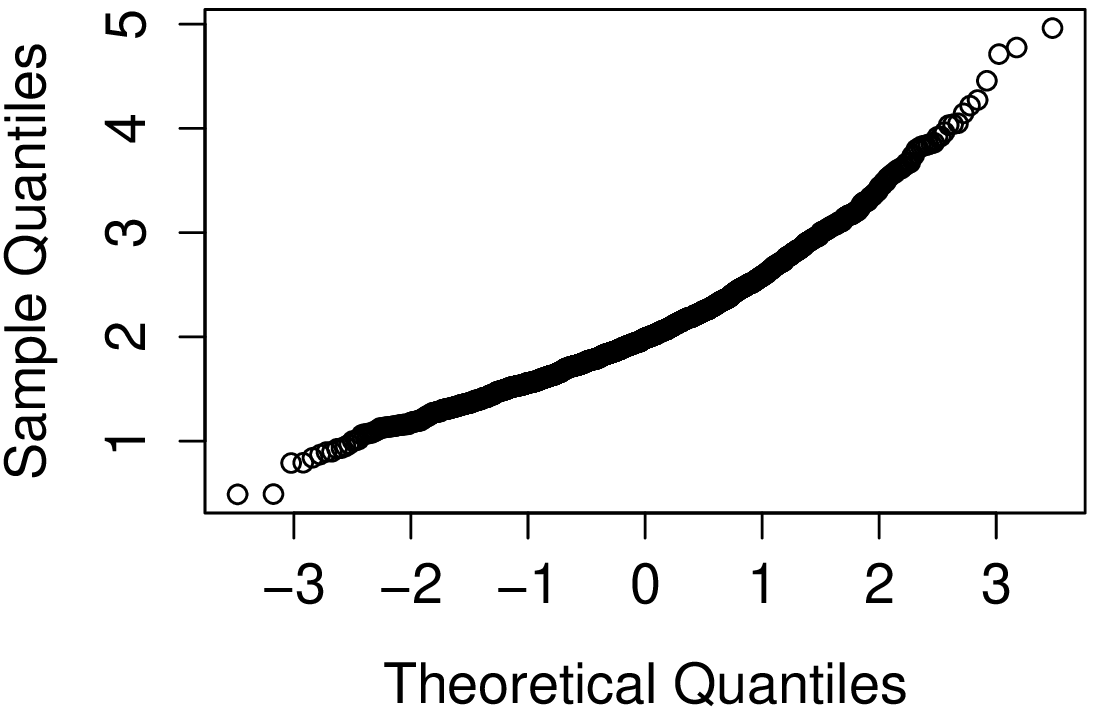}} \\
\caption{{\it QQ-plots of $\widehat \beta_1$ (first row) and $\widehat \beta_2$ (second row) for 1000 samples of size $n=150$.  The first column corresponds to $\gamma_2=0$, the second to $\gamma_2=1$ and the third to $\gamma_2=2$.   The bandwidth is $h = 3 n^{-2/7}$.}}
\label{fig1}
\end{center}
\end{figure}

Finally, we verify the accuracy of the naive bootstrap  proposed in Section \ref{sec_boot}.  We consider the above model, but restrict attention to $n=150$ and to the case where $P(cured)=0.2$ and $P(censoring)=0.4$.  Figure \ref{fig2} shows boxplots of the variance of $\widehat \beta_1$ and $\widehat \beta_2$ obtained from 250 bootstrap resamples for each of 500 samples.  The bandwidth is $h = 3 n^{-2/7}$.  The empirical variance of the 500 estimators of $\beta_1$ and $\beta_2$ is also added, and shows that the bootstrap variance is well centered around the corresponding empirical variance.  

\begin{figure}[!h]
\begin{center}
\hspace*{-.4cm}\scalebox{0.37}{\includegraphics{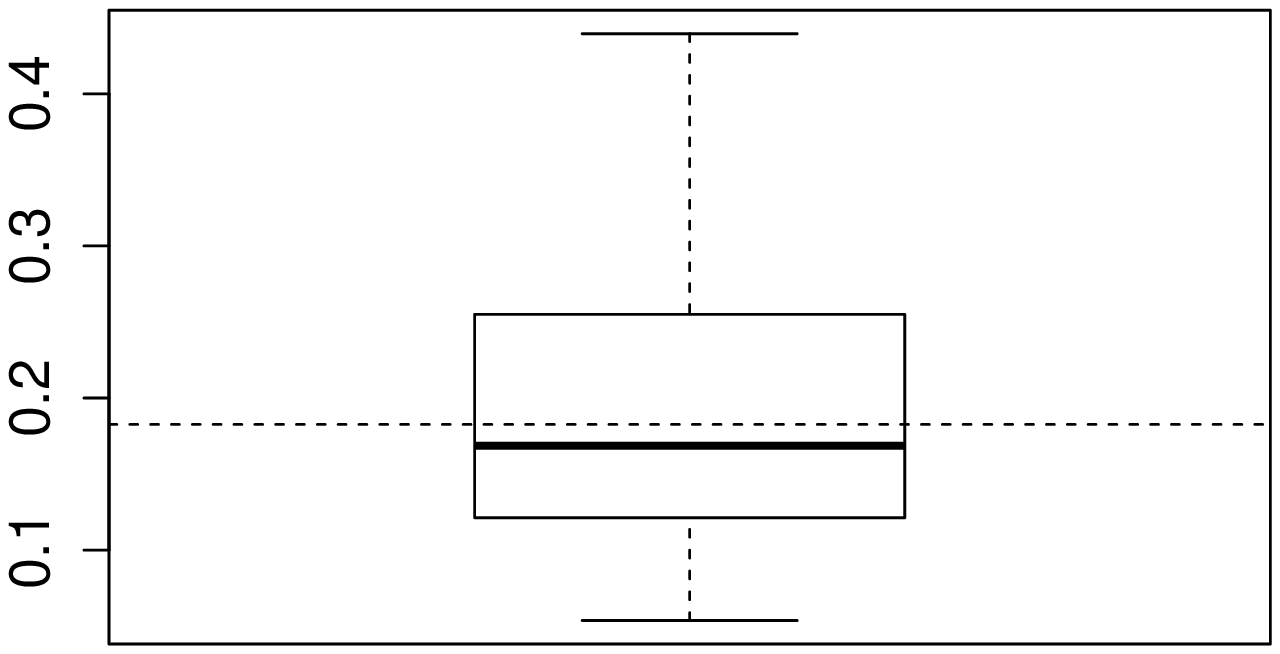}}\;\scalebox{0.37}{\includegraphics{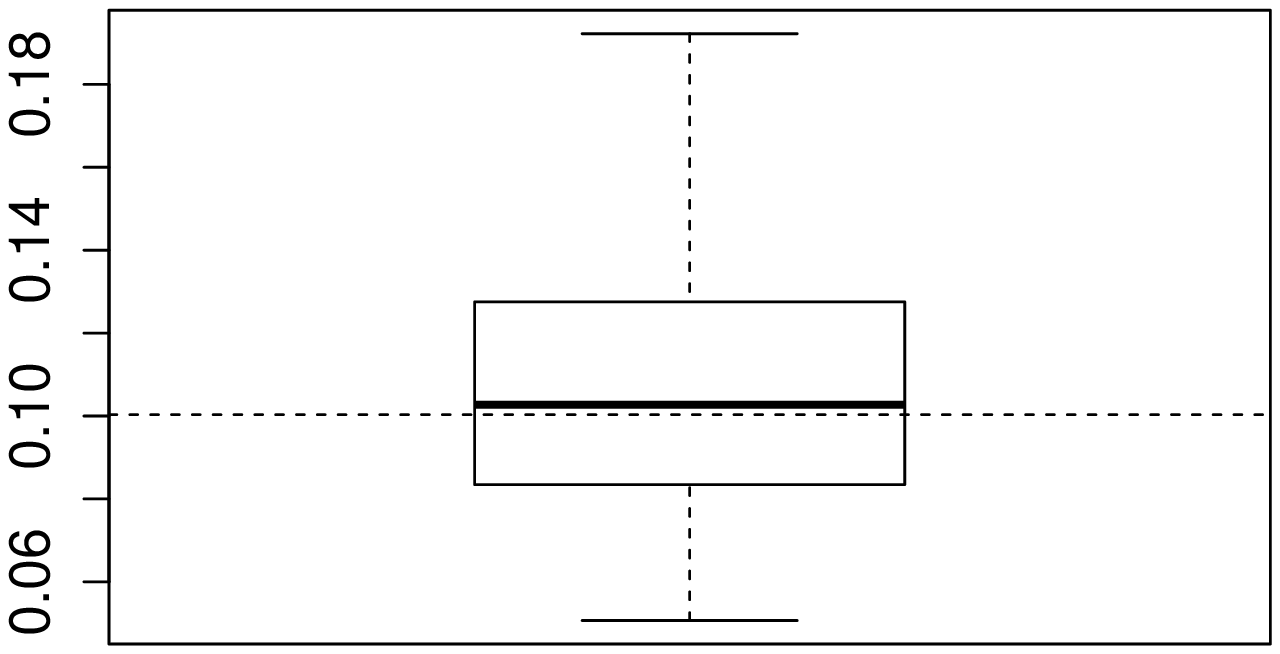}}\;\scalebox{0.37}{\includegraphics{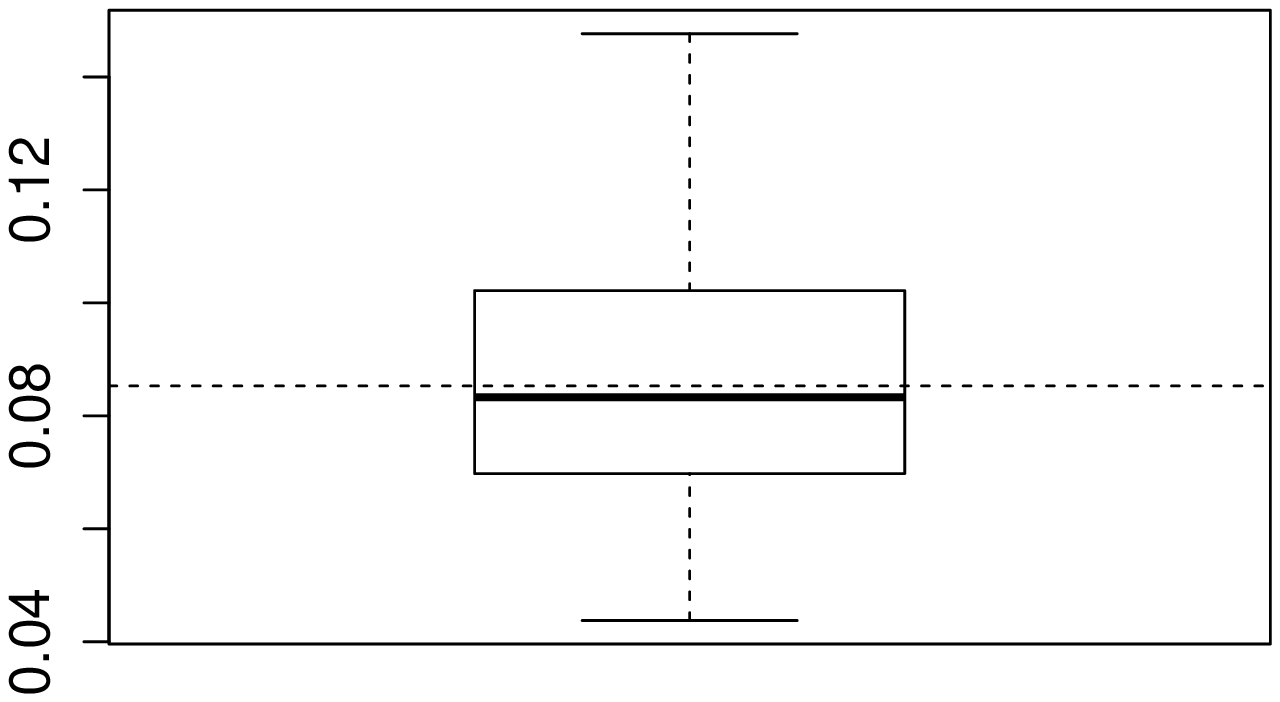}} \\
\hspace*{-.4cm}\scalebox{0.37}{\includegraphics{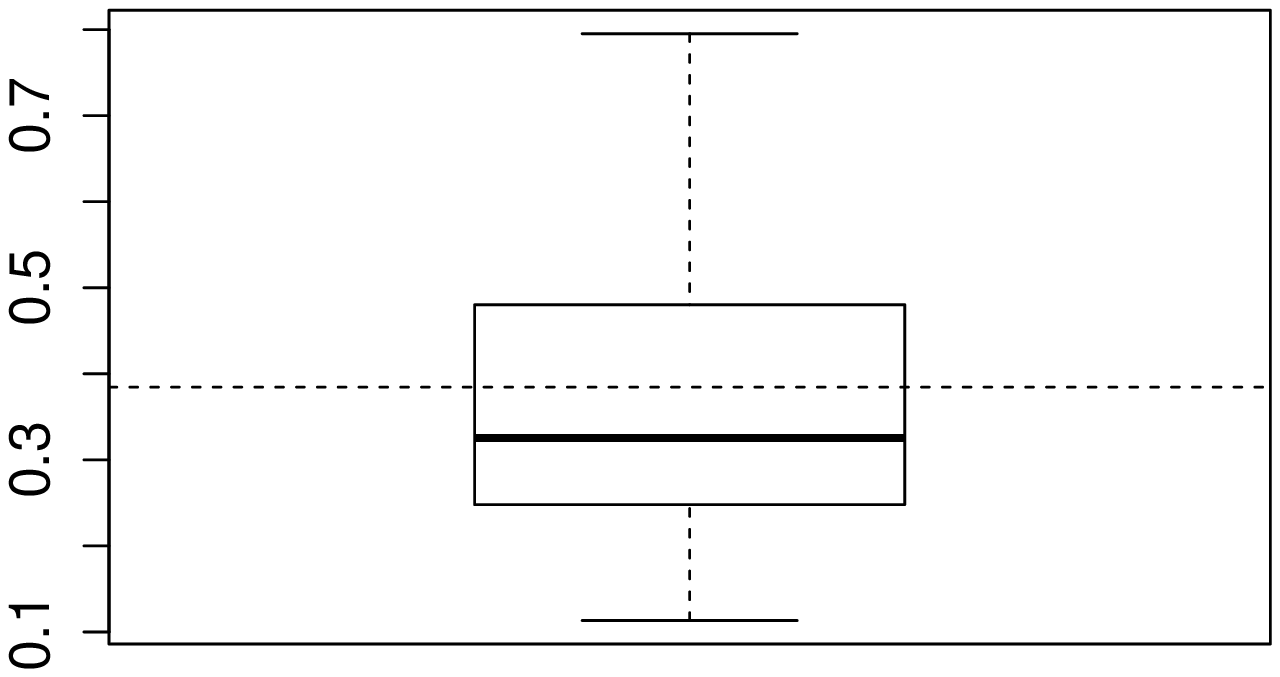}}\;\scalebox{0.37}{\includegraphics{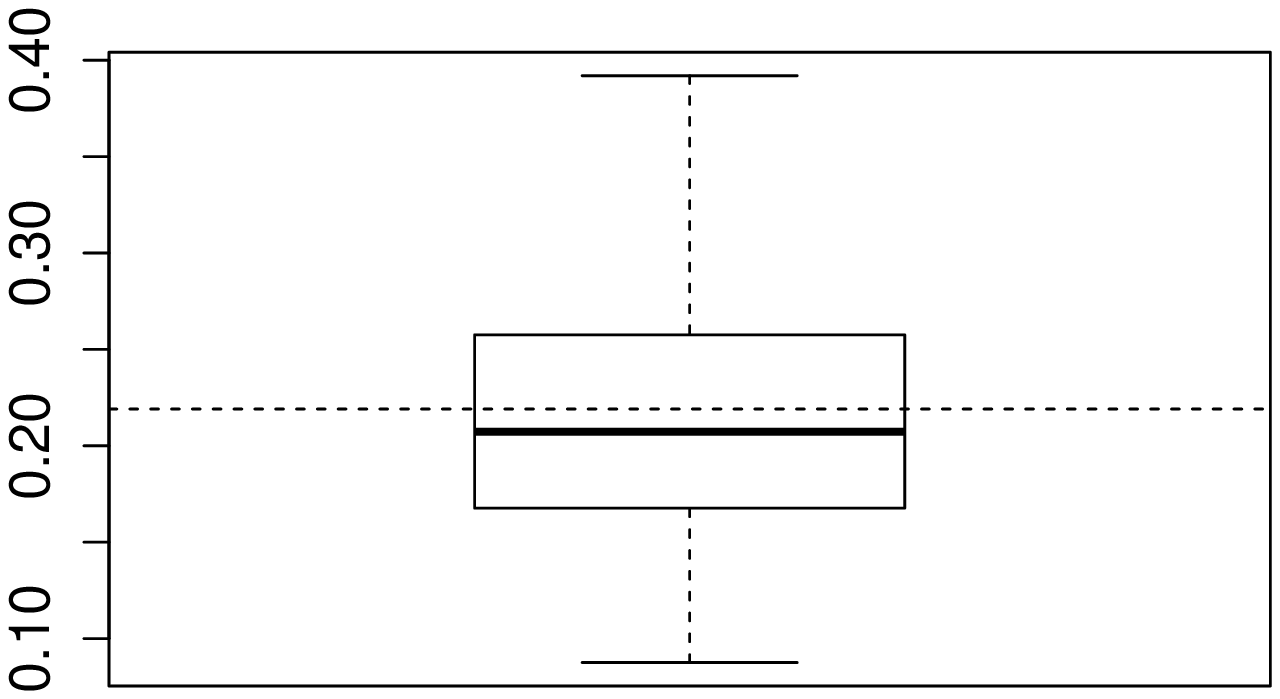}}\;\scalebox{0.37}{\includegraphics{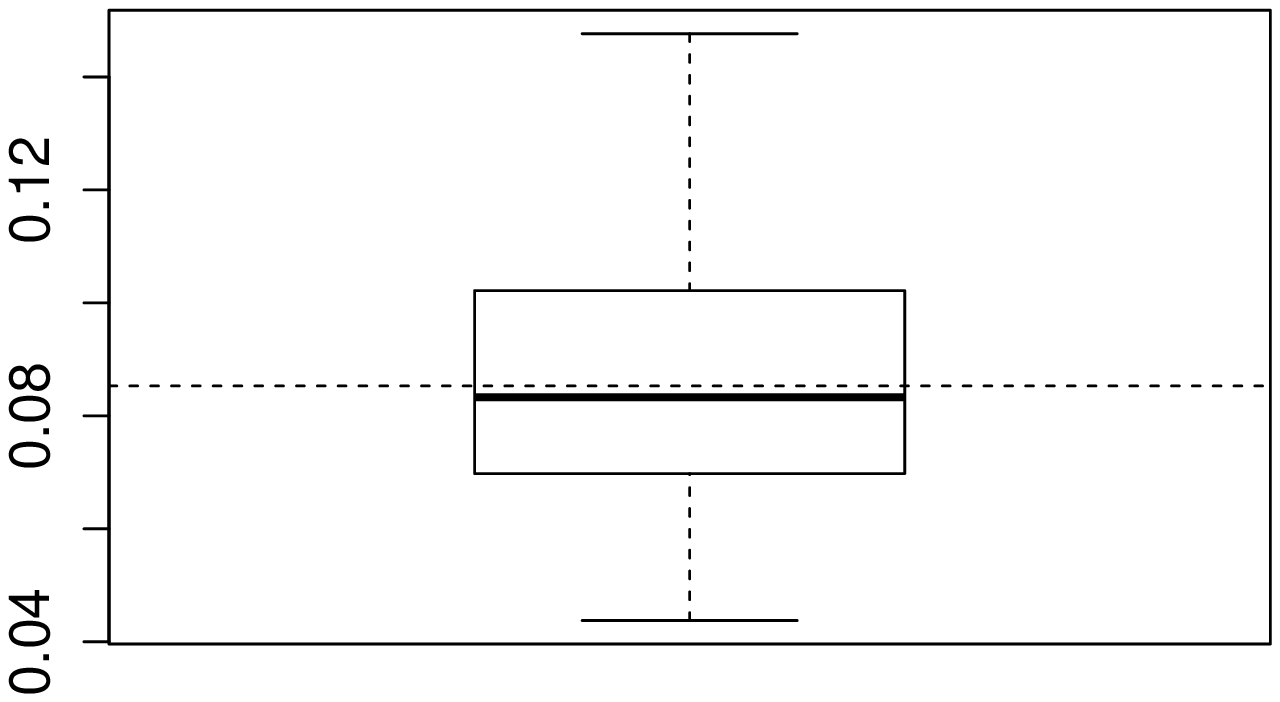}} \\
\caption{{\it Boxplots of the variance of $\widehat \beta_1$ (first row) and $\widehat \beta_2$ (second row) obtained from 250 bootstrap resamples for each of 500 samples of size $n=150$.  The first column corresponds to $\gamma_2=0$, the second to $\gamma_2=1$ and the third to $\gamma_2=2$.   The bandwidth is $h = 3 n^{-2/7}$.  The empirical variance of the 500 estimators of $\beta_1$ and $\beta_2$ is also added (dashed line).}}
\label{fig2}
\end{center}
\end{figure}

\section{Data analysis}
\label{sec_data}

Let us now apply our estimation procedure on two medical data sets.  The first one is about 286
breast cancer patients with lymph-node-negative breast cancer treated between 1980 and 1995 (Wang \emph{et al.} (2005)).  The event of interest is distant-metastasis, and the associated survival time is the distant metastasis-free survival time (defined as the time to first distant progression or death, whichever comes first).  107 of the 286 patients experience a relapse from breast cancer. 
The plot of the Kaplan-Meier estimator of the data is given in Figure \ref{fig3}(a) and shows a large plateau at about 0.60. Furthermore, a large proportion of the censored observations is in the plateau, which suggests that a cure model is appropriate for these data.  As a covariate we use the age of the patients, which ranges from 26 to 83 years and the average age is about 54 years. 

\begin{figure}[!h]
\begin{center}
\scalebox{0.55}{\includegraphics{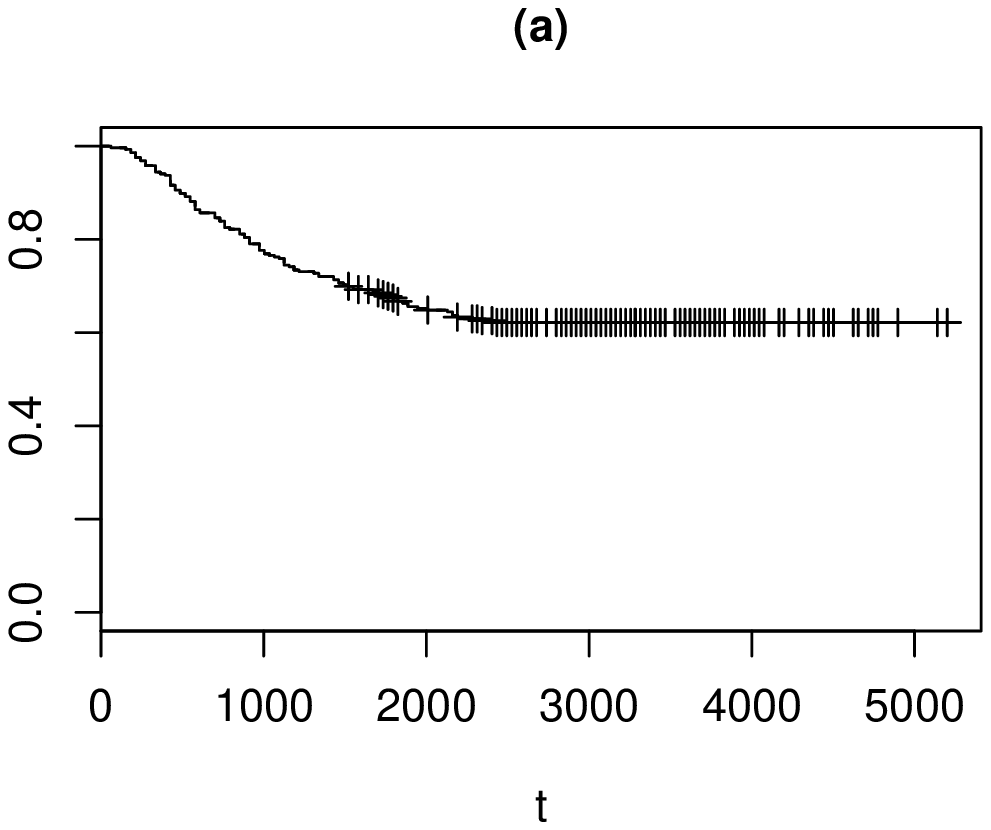}}\;\scalebox{0.55}{\includegraphics{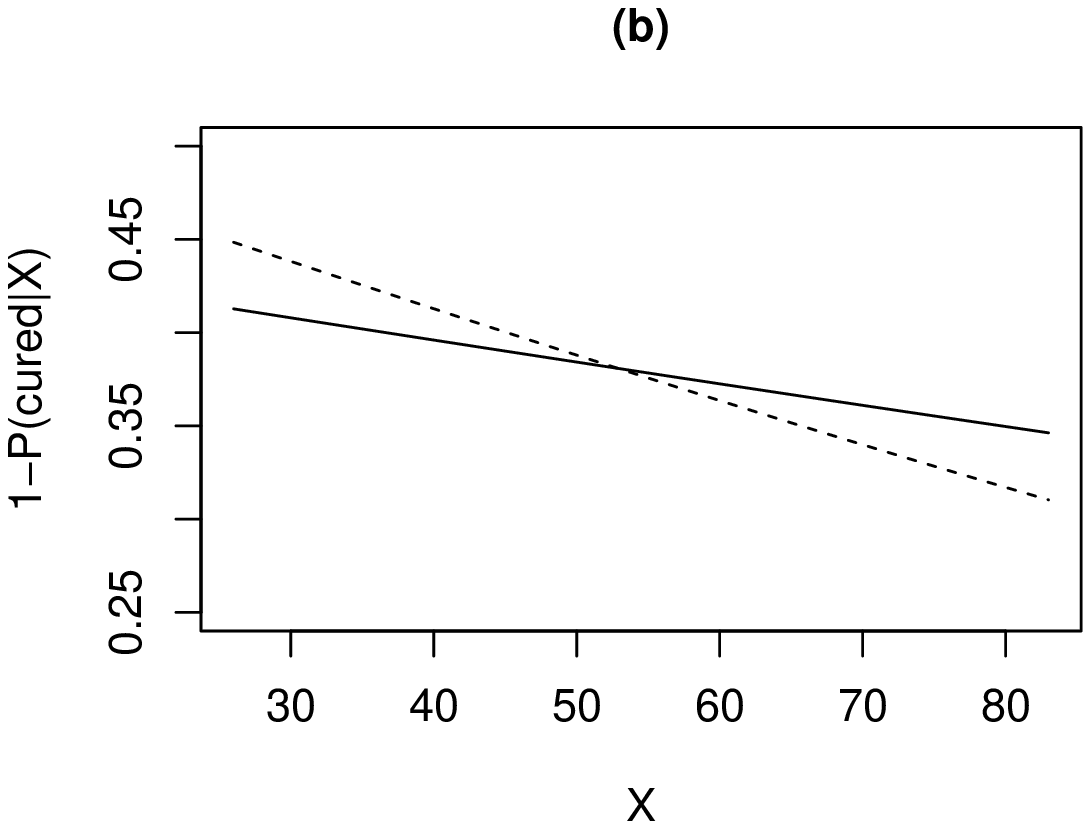}}\\
\vspace*{-.4cm}
\scalebox{0.55}{\includegraphics{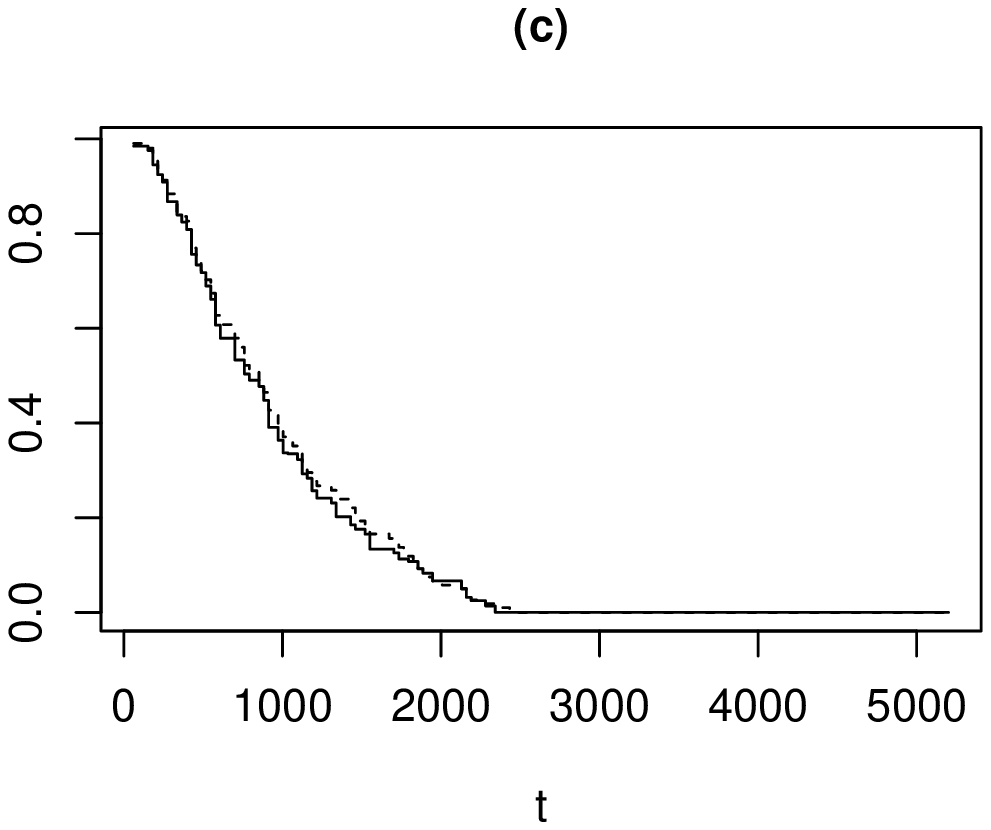}}\;\scalebox{0.55}{\includegraphics{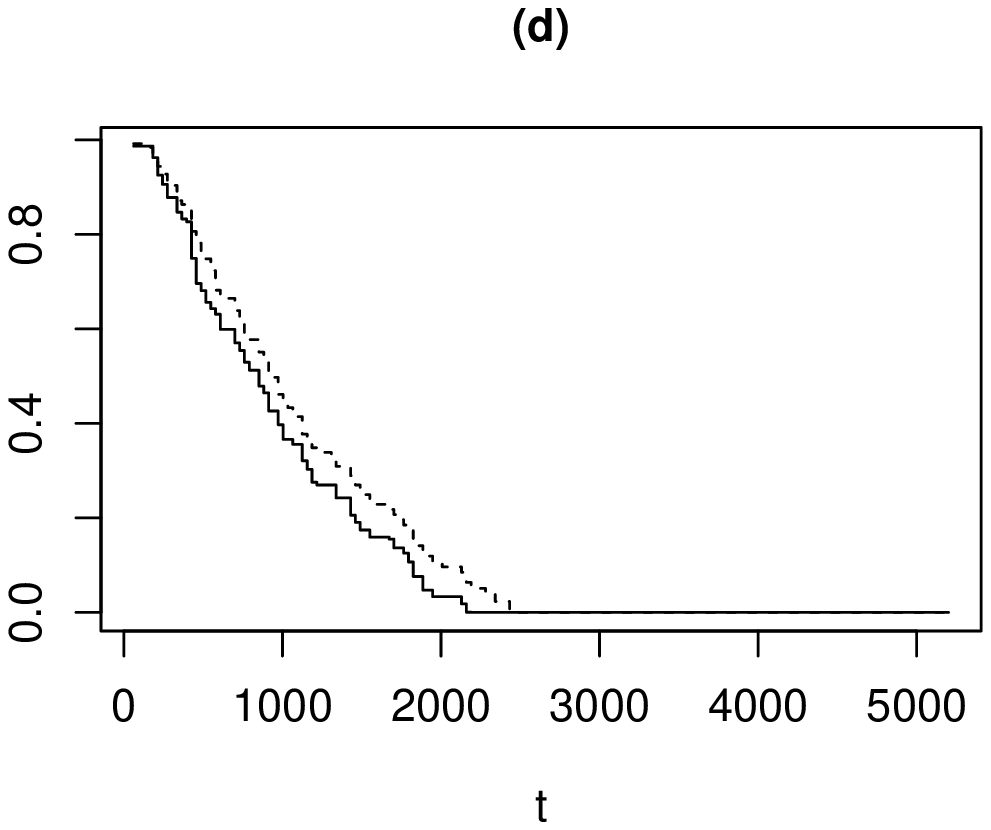}}\\
\vspace*{-.5cm}
\caption{{\it Analysis of the breast cancer data : (a) Kaplan-Meier estimator; 
(b) Graph of the proposed estimator of $\phi(x)$ (solid curve) and of the estimator based on the Cox model (dashed curve); (c) Estimation of $1-F_{T,0}(\cdot|x)$ using the proposed estimator (solid curve) and using the estimator based on the Cox model (dashed curve) when $x=48$; (d) Idem when $x=60$.}}
\label{fig3}
\end{center}
\end{figure}

We estimate $\beta$ using our estimator and using the estimator based on the Cox model.   The bandwidth $h$ is selected using cross-validation, as in the simulation section.   The estimated intercept is -0.224 (with standard deviation equal to 0.447 obtained using a naive bootstrap procedure), and the estimated slope parameter is -0.005  (with standard deviation equal to 0.008).  Under the Cox model the estimated intercept and slope are respectively 0.063 and -0.010.   A 95$\%$ confidence interval  is given by $(-1.100, 0.653)$ for the intercept and $(-0.021, 0.011)$ for the slope, where the variance is again based on the naive bootstrap procedure.   The graph of the two estimators of the function $\phi(x)$ is given in Figure \ref{fig3}(b).   The estimated coefficients and curves are quite close to each other, suggesting that the Cox model might be valid.  This is also confirmed by Figure \ref{fig3}(c)-(d), which shows the estimation of the survival function $1-F_{T,0}(\cdot|x)$ of the uncured patients for $x=48$ and $x=60$ based on our estimation procedure and the procedure based on the Cox model.   The figure shows that the two estimators are close for both values of $x$.  

Next, we analyse data provided by the Medical Birth Registry of Norway (see \linebreak http://folk.uio.no/borgan/abg-2008/data/data.html).  The data set contains information on births in Norway since 1967, related to a total of 53,558 women. We are interested in the time between the birth of the first and the second child, for those mothers whose first child died within the first year ($n=262$).
The covariate of interest is age ($X$), which is the age of the mother at the birth of the first child. 
The age ranges from 16.8 to 29.8 years, with an average of 23.2 years.
The cure rate is the fraction of women who gave birth only once.  Figure \ref{fig4}(a) shows the Kaplan-Meier estimator, 
and suggests that a cure fraction is present.   

\begin{figure}[!b]
\begin{center}
\scalebox{0.55}{\includegraphics{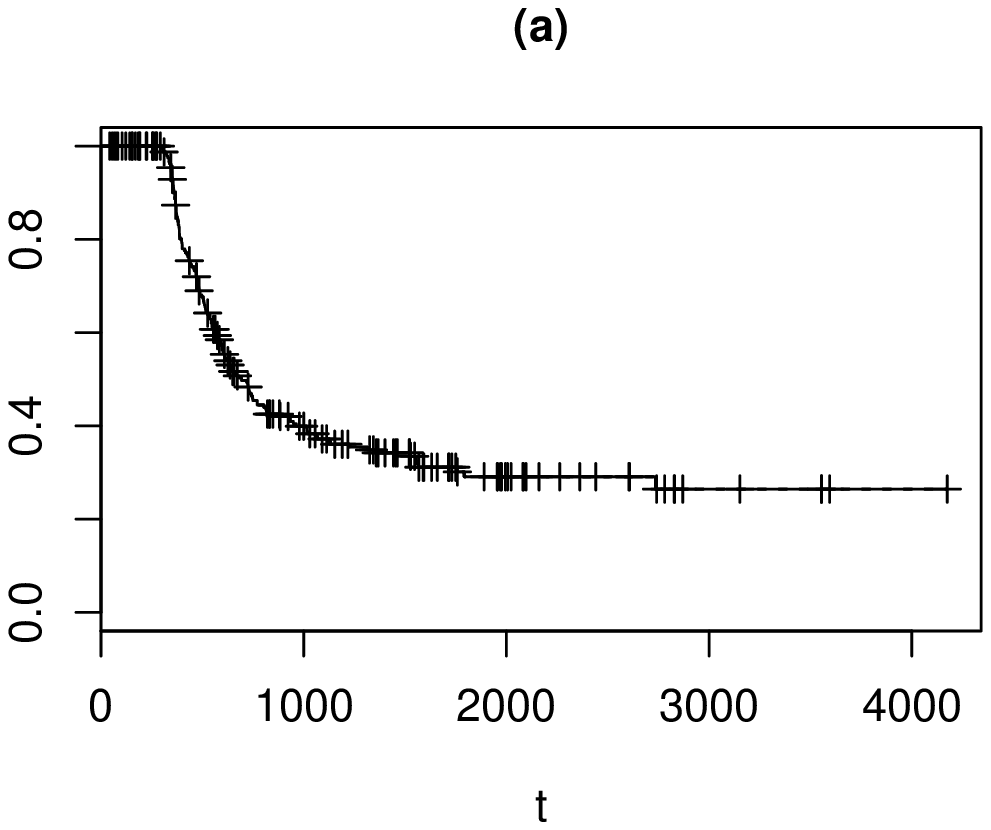}}\;\scalebox{0.55}{\includegraphics{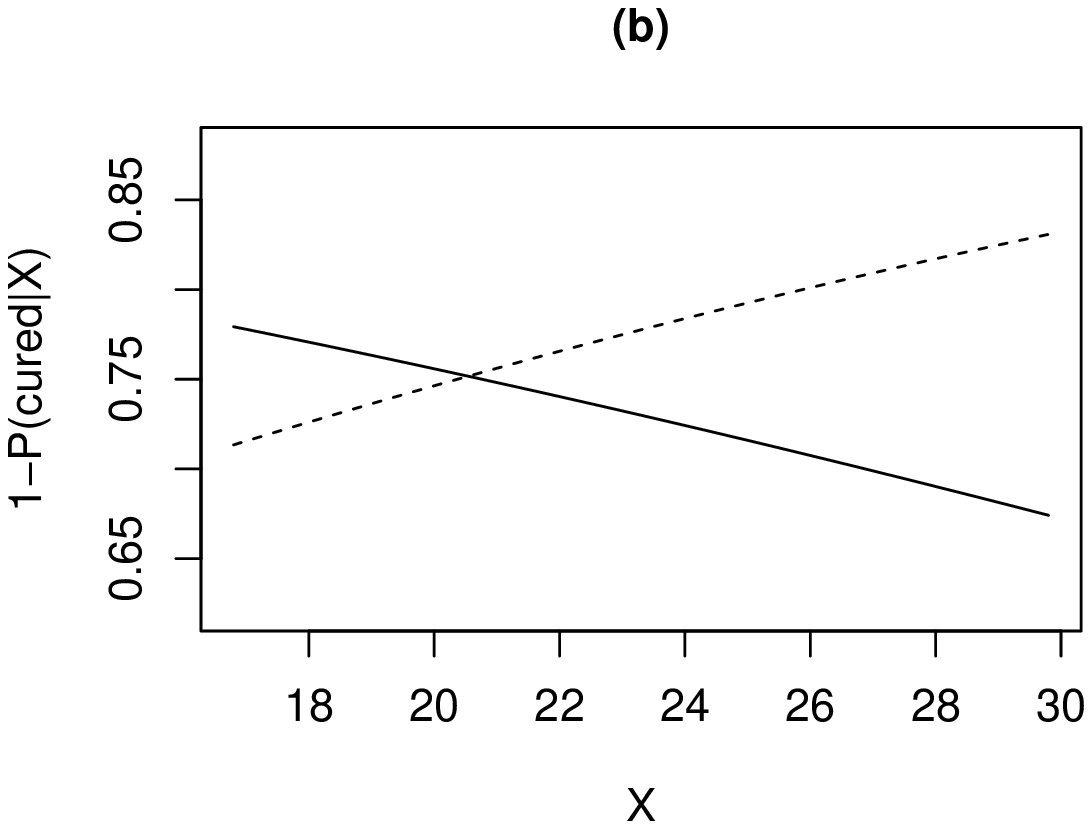}}\\
\vspace*{-.4cm}
\scalebox{0.55}{\includegraphics{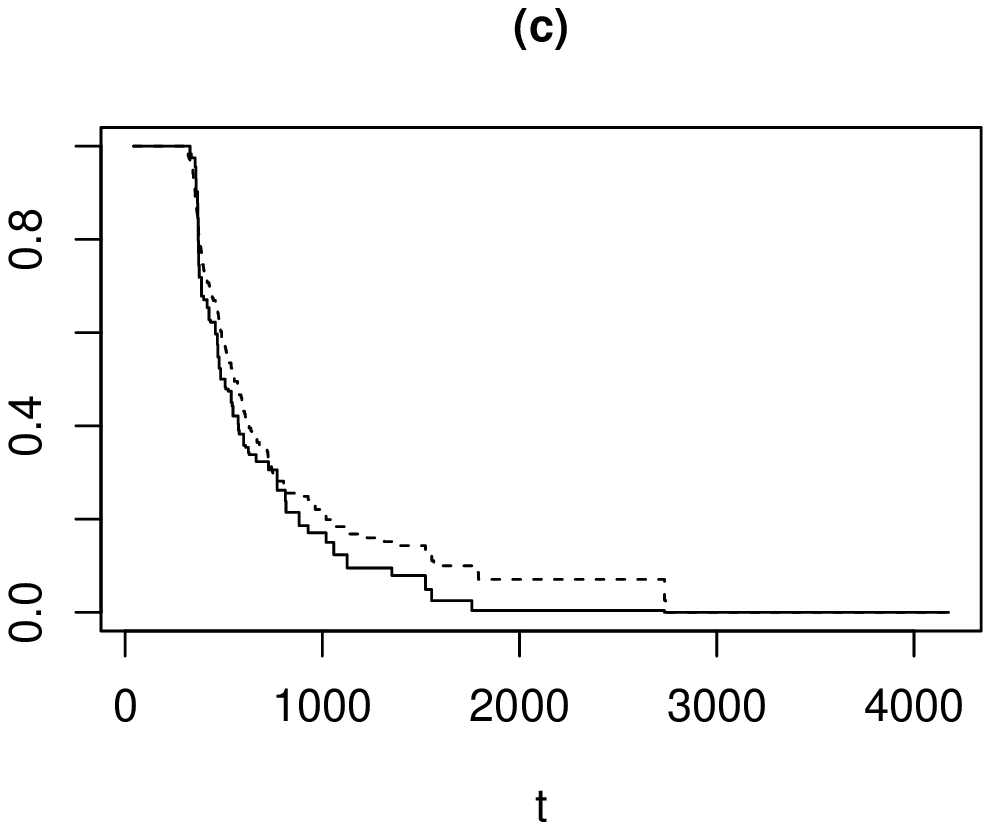}}\;\scalebox{0.55}{\includegraphics{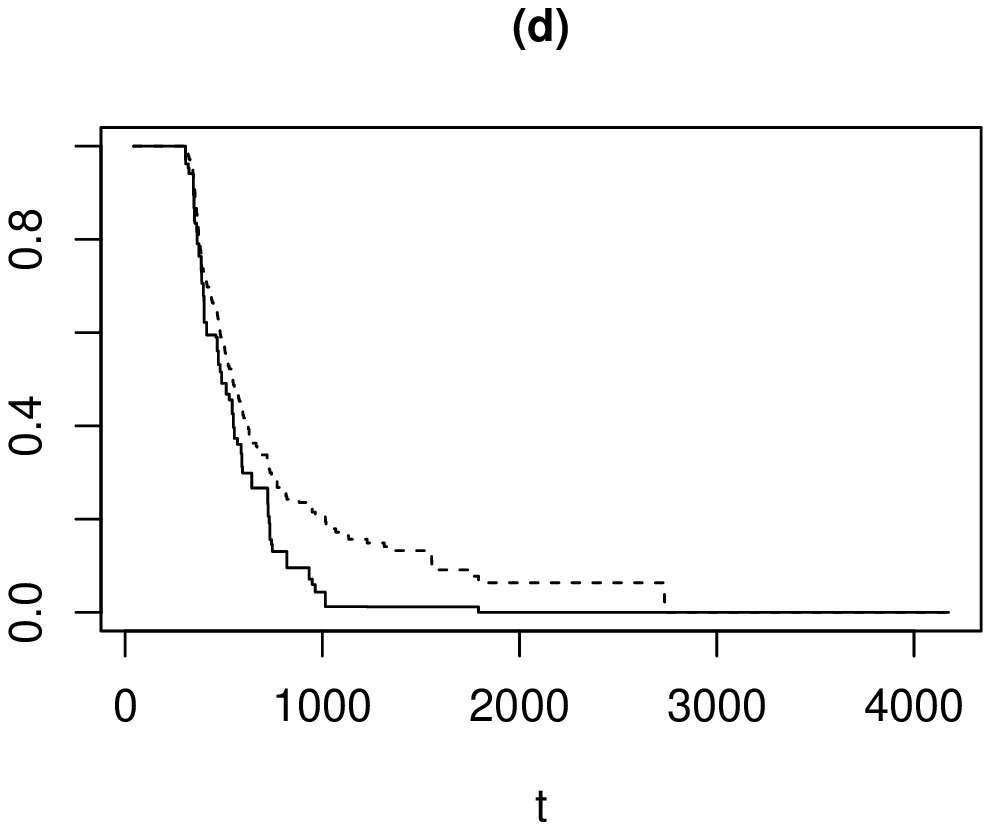}}\\
\vspace*{-.5cm}
\caption{{\it Analysis of the second birth data : (a) Kaplan-Meier estimator; 
(b) Graph of the proposed estimator of $\phi(x)$ (solid curve) and of the estimator based on the Cox model (dashed curve); (c) Estimation of $1-F_{T,0}(\cdot|x)$ using the proposed estimator (solid curve) and using the estimator based on the Cox model (dashed curve) when $x=21$; (d) Idem when $x=25$.}}
\label{fig4}
\end{center}
\end{figure}

As we did for the first data set, we analyse these data using the approach proposed in this paper, and also using the Cox mixture cure model.   The estimated intercept equals 1.952 using our model and 0.034 using the Cox model.  The bootstrap confidence interval for the intercept is $(-1.577,5.481)$ (the estimated standard deviation equals 1.801).   The estimated slope equals -0.041 respectively 0.052 using the two models.  For our estimation procedure the confidence interval is given by $(-0.193,0.111)$ (with estimated standard deviation equal to 0.078).   Figure \ref{fig4}(b) shows that the two estimators of the function $\phi(x)$ are quite different, and have opposite slopes.  Moreover, the survival function $1-F_{T,0}(\cdot|x)$ of the uncured patients is given in Figure \ref{fig4}(c)-(d) for $x=21$ and $x=25$.   We see that the estimator based on the Cox model is quite different from ours, suggesting that the Cox model might not be valid for these data, although a formal test would need to confirm this.  This is however beyond the scope of this paper.  Also note that the estimator of the cure proportion $1-\phi(x)$ is increasing under our model and decreasing under the Cox model.  It seems however natural to believe that the probability of having no second child (so the cure proportion) is increasing with age, which is again an indication that the Cox model is not valid for these data.

\section{Appendix : Proofs}

\begin{proof}[Proof of Proposition \ref{ident_lik}.]
By the properties of the likelihood of a Bernoulli random variable given that $Y\in dt$ and $X=x$, we have
\bqa\label{KL_ineg1}
&& \hspace*{-.5cm} \mathbb{E}\left[\delta \log \frac{\phi(x,\beta) F_{T,0}^\beta(dt\mid x) F_C ((t,\infty)\mid x) }{H_1(dt\mid x)} \right. \\
&& \left.\left.+ (1-\delta) \log \frac{F_C (dt\mid x)\left[\phi(x,\beta)F_{T,0}^\beta ((t,\infty)\mid x) + 1 - \phi(x,\beta)\right] }{H_0(dt\mid x)}\; \, \right| \, Y\in dt, X=x\right] 
\leq   0.\notag
\eqa
Integrate with respect to $Y$ and $X$ and deduce that
$$
\mathbb{E}\left[
\log p (Y,\delta,X;\beta) \right] \leq \mathbb{E}\left[
\log p (Y,\delta,X;\beta_0) \right].
$$
If there exists some $\beta\neq \beta_0$ such that the last inequality becomes an equality, then necessarily $p_1(Y,X;\beta)  =1 $ almost surely. Then
$$
F_C((t,\infty) \mid x)  \phi(x,\beta) F_{T,0}^\beta (dt\mid x) =  F_C((t,\infty) \mid x)  \phi(x,\beta_0) F_{T,0}^{\beta_0}(dt\mid x), \;\; \forall -\infty < t \leq \tau_H(x),
$$
for almost all $x\in\mathcal{X}.$ By Theorem \ref{th_ident}, we deduce that necessarily  $\beta=\beta_0$.
\end{proof}


\begin{lem}\label{F_C_et_al}
Let conditions (\ref{hhhh}), (AC1) and (AC4) hold true. Then,
$$
\sup_{x\in \mathcal{X}}\; \sup_{t\in (-\infty,\tau]} |\widehat F_C ([t,\infty)\mid x) -
F_C ([t,\infty)\mid x)| = o_{\mathbb{P}}(1),
$$
$$
\sup_{\beta\in B} \; \sup_{x\in \mathcal{X}}\; \sup_{t\in (-\infty,\tau]} |T_1(\beta, \widehat H_0,\widehat H_1)(t,x) - T_1(\beta, H_0,H_1)(t,x)|= o_{\mathbb{P}}(1),
$$
where
$$
T_1(\beta, H_0,H_1)(t,x) = H([t,\infty)\mid x) - (1-\phi(x,\beta))F_C ([t,\infty)\mid x),
$$
and $T_1(\beta, \widehat H_0, \widehat H_1)$ is defined similarly, but with $H_0,$ $H_1$ and $F_C $ replaced by
$\widehat H_0,$ $\widehat H_1$ and $\widehat F_C ,$ respectively.
Moreover,
$$
\sup_{\beta\in B} \; \sup_{x\in \mathcal{X}}\; \sup_{t\in (-\infty,\tau]} |
\widehat F_{T,0}^\beta ([t,\infty)\mid x) - F_{T,0}^\beta ([t,\infty)\mid x)|= o_{\mathbb{P}}(1).
$$
\end{lem}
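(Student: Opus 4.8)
My plan is to reduce all three displays to the Lipschitz continuity of the product-integral map in the supremum norm, once the relevant denominators have been bounded away from zero uniformly. Because $\widehat F_C$ carries no $\beta$, I would prove the claims in the order stated: the convergence of $\widehat F_C$ first, then the (purely algebraic) convergence of $T_1$, and finally the convergence of $\widehat F_{T,0}^\beta$, which feeds on the first two.

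For $\widehat F_C$, the denominator $\widehat H([s,\infty)\mid x)$ is uniformly bounded below on $(-\infty,\tau]$: since $H([s,\infty)\mid x)\ge H_0((\tau,\infty)\mid x)$ for $s\le\tau$ and $\inf_x H_0((\tau,\infty)\mid x)>0$ by (\ref{hhhh}), condition (AC1) transfers this lower bound to $\widehat H$ with probability tending to one. I would then decompose $\widehat\Lambda_C-\Lambda_C$ into a term integrating $d(\widehat H_0-H_0)$ against $1/\widehat H$ and a term carrying the denominator discrepancy $\widehat H^{-1}-H^{-1}=(H-\widehat H)/(\widehat H\,H)$, and perform one integration by parts to move the differential off $\widehat H_0-H_0$ onto the monotone integrator $1/\widehat H$ (whose total variation on $(-\infty,\tau]$ is at most $1/\inf\widehat H$). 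This bounds $\sup_x\sup_{t\le\tau}|\widehat\Lambda_C-\Lambda_C|$ by a fixed multiple of $\sup_{x,t}(|\widehat H_0-H_0|+|\widehat H-H|)=o_{\mathbb P}(1)$. As $\Lambda_C(\cdot\mid x)$ has total variation on $(-\infty,\tau]$ bounded by $1/\inf_x H_0((\tau,\infty)\mid x)$ uniformly in $x$, the Duhamel equation of Gill and Johansen (1990) upgrades the uniform convergence of the hazards to uniform convergence of the product integrals, which is the first display.

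The second display is immediate, since $T_1(\beta,\widehat H_0,\widehat H_1)-T_1(\beta,H_0,H_1)=(\widehat H-H)([t,\infty)\mid x)-(1-\phi(x,\beta))(\widehat F_C-F_C)([t,\infty)\mid x)$ and $|1-\phi(x,\beta)|\le 1$, so (AC1) and the first display close it uniformly in $(\beta,x,t)$. For the third display I would rerun the product-integral argument with $\Lambda_{T,0}^\beta(ds\mid x)=H_1(ds\mid x)/T_1(\beta,\eta_0)(s,x)$ replacing $\Lambda_C$: the same integration-by-parts estimate controls $\sup_{\beta,x,t\le\tau}|\widehat\Lambda_{T,0}^\beta-\Lambda_{T,0}^\beta|$ through (AC1) for $\widehat H_1\to H_1$ and the second display for $T_1(\beta,\widehat\eta)\to T_1(\beta,\eta_0)$, uniformly in $\beta$ because $\phi(\cdot,\beta)$ is uniformly bounded and, by (AC4), uniformly bounded below.

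The one genuinely delicate point is the uniform lower bound $\inf_{\beta\in B}\inf_x\inf_{t\le\tau}T_1(\beta,\eta_0)(t,x)>0$, which simultaneously keeps $\Lambda_{T,0}^\beta$ of uniformly bounded total variation (at most $1/\inf T_1$) and controls the Lipschitz constant in the Duhamel bound; the estimated denominator $T_1(\beta,\widehat\eta)$ then inherits positivity eventually from the second display. Here I would exploit the factorization $T_1(\beta,\eta_0)(t,x)=F_C([t,\infty)\mid x)\,[F_T([t,\infty]\mid x)-(1-\phi(x,\beta))]$: both factors are nonincreasing in $t$, so their product over $t\le\tau$ attains its infimum at $t=\tau$, where positivity (uniform in $\beta$ and $x$) is exactly the statement recorded after (\ref{hhhh}) that $\inf_x F_C((\tau,\infty)\mid x)>0$ and $\inf_x F_{T,0}^\beta(\{\tau\}\mid x)>0$. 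Securing this denominator bound together with the attendant uniform total-variation control is where I expect the real work to lie; the remaining product-integral manipulations are then routine and identical in form to the $\widehat F_C$ step.
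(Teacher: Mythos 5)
Your proposal tracks the paper's own proof almost step for step: the paper likewise first establishes uniform convergence of $\widehat \Lambda_C$ via the same two-term decomposition (it integrates $\widehat H_0 - H_0$ against $1/H$ rather than $1/\widehat H$ and leaves the integration-by-parts step implicit, which you rightly make explicit), then invokes Duhamel's identity to pass to $\widehat F_C$; it dismisses the $T_1$ display with ``the same type of arguments apply'' (your purely algebraic observation is simpler and equally valid); and it obtains the third display from a uniform lower bound on the denominator $T_1(\beta,\eta_0)$, transferred to the estimated denominator with probability tending to one, exactly as you propose.

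The one place where your argument contains a genuine flaw is the justification of the key bound $\inf_{\beta\in B}\inf_{x\in\mathcal{X}}\inf_{t\in(-\infty,\tau]}T_1(\beta,\eta_0)(t,x)>0$. Your monotonicity reduction to $t=\tau$ is sound, but your grounding of positivity at $t=\tau$ is circular: the quantity $F_{T,0}^\beta(\{\tau\}\mid x)$ you invoke is \emph{defined} by the inversion formulae (\ref{inv_LamT_beta})--(\ref{inv_FT_beta}), whose denominator is precisely $T_1(\beta,\eta_0)$, so its positivity cannot serve as an input to proving that $T_1(\beta,\eta_0)$ is positive. Written out, the bracket at $t=\tau$ equals $F_T(\{\tau\}\mid x)+\phi(x,\beta)-\phi(x)$, where $\phi(x)=\mathbb{P}(T<\infty\mid X=x)$ is the \emph{true} uncured probability (which enters because $F_T$ in your factorization is the true conditional law, not the model's law at $\beta$). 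Since $F_T(\{\tau\}\mid x)\geq H_1(\{\tau\}\mid x)$, condition (\ref{hhhh}) makes this uniformly positive for $\beta$ in a neighborhood of $\beta_0$, but for $\beta\in B$ with $\phi(x,\beta)$ much smaller than $\phi(x)$ it can be negative, so positivity over \emph{all} of $B$ does not follow from (\ref{hhhh}) and (AC4) by your route. To be fair, the paper is no more rigorous here: it asserts the same uniform bound directly from (\ref{hhhh}) and (AC4) without proof, and in its asymptotic-normality section it only claims the analogous bound (\ref{infinf}) on a neighborhood $B_0$ of $\beta_0$, which is what can actually be derived. So your proof is at the level of the paper's, but you should present this bound as an assertion of the same kind the paper makes, rather than as a consequence of the recorded facts about $F_C$ and $F_{T,0}^\beta$ at $\tau$.
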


\begin{proof}[Proof of Lemma \ref{F_C_et_al}]
Let us first investigate the uniform convergence of the estimated cumulative hazard measure $\widehat \Lambda_C(\cdot\mid x)$. For any $t\in(-\infty,\tau]$ let us write
\begin{multline*}
\widehat \Lambda_C((-\infty,t]\mid x) - \Lambda_C((-\infty,t]\mid x) = \int_{(-\infty,t]}\frac{\widehat H_0 (ds\mid x)}{\widehat H([s,\infty)\mid x)} - \int_{(-\infty,t]}\frac{H_0 (ds\mid x)}{H([s,\infty)\mid x)}\\
=\int_{(-\infty,t]}\left[ \frac{1}{\widehat H([s,\infty)\mid x)} -\frac{1}{ H([s,\infty)\mid x)} \right] \widehat H_0 (ds\mid x) + \int_{(-\infty,t]}\frac{\widehat H_0 (ds\mid x) - H_0 (ds\mid x)}{H([s,\infty)\mid x)}.
\end{multline*}
The integrals with respect to $\widehat H_0 (ds\mid x)$ are well defined, since, with probability tending to 1, for each $x\in\mathcal{X}$, the map $s\mapsto \widehat H_0 ((-\infty,s]\mid x),$ $s\in(-\infty,\tau],$ is a function of bounded variation. The uniform convergence Assumption (AC1) implies that
\begin{eqnarray*}
&& \hspace{-0.8cm}\sup_{x\in\mathcal{X}} \sup_{t\in (-\infty,\tau]} \left| \widehat \Lambda_C((-\infty,t]\mid x) - \Lambda_C((-\infty,t]\mid x) \right| \\
\!\!&\leq & \!\!\!c \; \sup_{x\in\mathcal{X}}\sup_{t\in (-\infty,\tau]} \frac{\left\{ \left| \widehat H_0([t,\infty)\!\mid \!x)\! -\! H_0([t,\infty)\!\mid \!x)\right|\!+\!\left| \widehat H([t,\infty)\mid x) \!-\! H([t,\infty)\!\mid \!x)\right|\right\}}{H([\tau,\infty)\mid \!x)^2},
\end{eqnarray*}
for some constant $c>0.$ Next, by Duhamel's identity (see Gill and Johansen 1990),
\begin{multline*}
\widehat F_C((t,\infty) \mid x) - F_C( (t,\infty) \mid x) = - F_C( (t,\infty) \mid x) \\
\times
 \int_{(-\infty,t]} \frac{\widehat F_C( [s,\infty) \mid x)}{F_C( (s,\infty) \mid x)} \left(\widehat \Lambda_C(ds\mid x) - \Lambda_C(ds\mid x) \right).
\end{multline*}
Then, the uniform convergence of $\widehat F_C(\cdot \mid x)$ follows from the uniform convergence of  $\widehat \Lambda_C(\cdot \mid x)$ and condition (\ref{hhhh}). The same type of  arguments apply for $T_1(\beta,\widehat H_0,\widehat H_1)$, and hence we omit the details.

Next, since by conditions (\ref{hhhh}) and (AC4) we have
$$
\inf_{\beta \in B} \; \inf_{x\in \mathcal{X}} \; \inf_{t\in(-\infty,\tau]}\left[ H([t,\infty)\mid x) -  (1-\phi(x,\beta)) F_C ([t,\infty)\mid x)\right]>0,
$$
there exists some constant $c>0$ with the property that
$$
\mathbb{P}\left(\inf_{\beta\in B} \; \inf_{x\in \mathcal{X}} \; \inf_{t\in(-\infty,\tau]}\left[\widehat H([t,\infty)\mid x) - (1-\phi(x,\beta)) \widehat F_C ([t,\infty)\mid x)\right]\geq c>0\right) \rightarrow 1.
$$
Hence, the uniform convergence of $\widehat F_{T,0}^\beta(\cdot \mid x)$ follows.
\end{proof}


\begin{proof}[Proof of Theorem \ref{consist_prop}]
Let us write
$$
F_{T,0}^\beta ( \{ Y_i \} \mid X_i) = \Lambda_{T,0}^\beta (\{ Y_i \} \mid X_i) F_{T,0}^\beta ([Y_i,\infty) \mid X_i) 
= H_1 (\{Y_i\} \mid X_i) \frac{F_{T,0}^\beta ([Y_i,\infty) \mid X_i)}{T_1(\beta, H_0,H_1)(Y_i,X_i)},
$$
where
$$
T_1(\beta, H_0,H_1)(t,x) = H([t,\infty)\mid x) - (1-\phi(x,\beta))F_C ([t,\infty)\mid x).
$$
Moreover, let 
$$
q_i(\beta, H_0,H_1) = q(\beta, H_0,H_1) (Y_i,\delta_i,X_i),
$$ 
where, for $ t \in \R,\; d\in\{0,1\},\; x\in\mathcal{X},$
\begin{multline*}
q(\beta, H_0,H_1)(t,d,x) = d \{\log \phi(x,\beta)+ \log F_{T,0}^\beta ([t,\infty) \mid x) - \log T_1(\beta, H_0,H_1)(t,x)  \}\\
+ (1-d )\log\{ \phi(x,\beta) F_{T,0}^\beta ([t,\infty)\mid x) + 1 - \phi(x,\beta) \}.
\end{multline*}
Let
$$
Q_n(\beta, H_0,H_1) = \frac{1}{n} \sum_{i=1}^n q_i(\beta, H_0, H_1).
$$
Similarly, let us consider $Q_n(\beta, \widehat H_0, \widehat H_1)$ that is defined as $Q_n(\beta, H_0,H_1),$ but with $H_0,$ $H_1,$ $F_C $ and $F_{T,0}^\beta$ replaced by
$\widehat H_0,$ $\widehat H_1,$  $\widehat F_C $ and $\widehat F_{T,0}^\beta$, respectively.
Then the estimator $\widehat \beta$ 
in equation (\ref{estim_def}) becomes
$$
\widehat \beta  = \arg \max_{\beta \in B} Q_n(\beta, \widehat H_0, \widehat H_1).
$$
The first step is to check that
\begin{equation}\label{1st}
\sup_{\beta\in B} \left| Q_n(\beta, \widehat H_0, \widehat H_1) - Q_n(\beta, H_0, H_1) \right| = o_{\mathbb{P}}(1).
\end{equation}
This follows directly from Lemma \ref{F_C_et_al}.
Next, given our assumptions, it is easy to check that for any
$t\in(-\infty,\tau],$ $d\in\{0,1\},$ $ x\in\mathcal{X},$
$$
\left| q(\beta, H_0,H_1)(t,d,x) - q(\beta^\prime, H_0,H_1)(t,d,x)\right| \leq C \|\beta- \beta^\prime\|^a,\quad\forall \beta,\beta^\prime\in B,
$$
with $a>0$ from Assumption (AC3) and some constant $C$ depending only  on $c_1$ from Assumption (AC3) and the positive values $\inf_{x \in\mathcal{X} } H_1(\{\tau\}\mid x) $ and $\inf_{x \in\mathcal{X} }H_0((\tau,\infty)\mid x) .$  It follows that the class $\{ (t,d,x) \rightarrow q(\beta,H_0,H_1)(t,d,x) : \beta \in B \}$ is Glivenko-Cantelli. Hence, 
$$
\sup_{\beta\in B}\left| Q_n(\beta, H_0,H_1) - Q(\beta, H_0,H_1) \right| = o_{\mathbb{P}}(1),
$$
where $Q = \mathbb{E}(Q_n)$.  Finally, Proposition \ref{ident_lik} guarantees that
$$
\beta_0 = \arg\max_{\beta\in B} Q(\beta, H_0,H_1).
$$
Gathering the facts, we deduce that $\widehat \beta - \beta_0 =o_{\mathbb{P}}(1).$ 
\end{proof}


%

\begin{proof}[Proof of Theorem \ref{asno}.]
We show the asymptotic normality of our estimator by verifying the high-level conditions in Theorem 2 in Chen \emph{et al.}  (2003).   First of all, for the consistency we refer to Section \ref{consistency}, whereas conditions (2.1) and (2.2) in Chen \emph{et al.}  (2003) are satisfied by construction and thanks to assumption (AN1), respectively.   Concerning (2.3), first note that the expression inside the expected value in $\nabla_\eta M(\beta,\eta_0)[\eta-\eta_0]$ is linear in $\nabla_\eta T_j(\beta,\eta_0)[\eta-\eta_0]$ ($j=1,2,3,4$).  Hence, we will focus attention on the latter G\^ateaux derivatives.  First,
$$ \nabla_\eta T_1(\beta,\eta_0)[\eta-\eta_0](t,x) = (\eta_1 - \eta_{01} + \eta_2 - \eta_{02})(t,x) - (1-\phi(x,\beta)) \nabla_\eta T_2(\eta_0)[\eta-\eta_0](t,x). $$
Using Duhamel's formula (see Gill and Johansen 1990), we can write
\begin{eqnarray*}
\nabla_\eta T_2(\eta_0)[\eta-\eta_0](t,x) &\m = &\m - T_2(\eta_0)(t,x) \int_{-\infty<u<t} \frac{1}{(\eta_{01}+\eta_{02})(u,x) - \eta_{01}(\{u\},x)} \\
&\m &\m \times  \Big\{(\eta_1-\eta_{01})(du,x) - \frac{\eta_{01}(du,x) (\eta_1 + \eta_2 - \eta_{01} - \eta_{02})(u,x)}{(\eta_{01}+\eta_{02})(u,x)} \Big\}.
\end{eqnarray*}
In a similar way, we find that
\begin{eqnarray*}
 \nabla_\eta T_3(\beta,\eta_0)[\eta-\eta_0](t,x) 
& =& - T_3(\beta,\eta_0)(t,x) \int_{-\infty < u < t} \frac{1}{T_1(\beta,\eta_0)(u,x) - \eta_{02}(\{u\},x)} \\
&& \hspace*{.1cm} \times \Big\{(\eta_2-\eta_{02})(du,x) - \frac{\eta_{02}(du,x) \big[T_1(\beta,\eta)-T_1(\beta,\eta_0)\big](u,x)}{T_1(\beta,\eta_0)(u,x)} \Big\}.
\end{eqnarray*}
Finally,
\begin{eqnarray*}
&& \nabla_\eta T_4(\beta,\eta_0)[\eta-\eta_0](t,x) \\
&& = - \nabla_\beta \phi(x,\beta) \int_{(-\infty,t)} \left\{\frac{\nabla_\eta T_2(\eta_0)[\eta-\eta_0](s,x) \eta_{02}(ds,x) + T_2(\eta_0)(s,x) (\eta_2-\eta_{02})(ds,x)}{T_1(\beta,\eta_0)(s,x) \big[T_1(\beta,\eta_0)(s,x) - \eta_{02}(\{s\},x)\big]} \right. \\
&& \hspace*{2cm} - \frac{T_2(\eta_0)(s,x) \eta_{02}(ds,x) \nabla_\eta T_1(\beta,\eta_0)[\eta-\eta_0](s,x)}{\big[T_1(\beta,\eta_0)(s,x)\big]^2 \big[T_1(\beta,\eta_0)(s,x) - \eta_{02}(\{s\},x)\big]} \\
&& \hspace*{2cm} \left. - \frac{T_2(\eta_0)(s,x) \eta_{02}(ds,x) \big[\nabla_\eta T_1(\beta,\eta_0)[\eta-\eta_0](s,x) - (\eta_2-\eta_{02})(\{s\},x)\big]}{T_1(\beta,\eta_0)(s,x) \big[T_1(\beta,\eta_0)(s,x) - \eta_{02}(\{s\},x)\big]^2} \right\}.
\end{eqnarray*}
Note that all denominators in $\nabla_\eta T_j(\beta,\eta_0)[\eta-\eta_0](t,x)$ are bounded away from zero, thanks to (\ref{infinf}) and (\ref{infinfinf}).  By tedious but rather elementary arguments, it follows from these formulae that
$$
\| \nabla_\eta T_j(\beta,\eta_0)[\eta-\eta_0]- \nabla_\eta T_j(\beta_0,\eta_0)[\eta-\eta_0]\|_{\mathcal{H}} \leq C \|\beta-\beta_0\| \|\eta-\eta_0\|_{\mathcal{H}},
$$
for some constant $C$. Hence, it can be easily seen that $\nabla_\eta T_j(\beta,\eta_0)[\eta-\eta_0]$ satisfies the second property in assumption (2.3) in Chen \emph{et al.}  (2003), and hence the same holds true for $\nabla_\eta M(\beta,\eta_0)[\eta-\eta_0]$. Similarly, by decomposing $T_j(\beta,\eta) - T_j(\beta,\eta_0) - \nabla_\eta T_j(\beta,\eta_0)[\eta-\eta_0]$ using Taylor-type arguments (in $\eta$), the first property in assumption (2.3) is easily seen to hold true.

Next, conditions (2.4) and (2.6) are satisfied thanks to Assumption (AN4) and because it follows from the above calculations of $\nabla_\eta T_j(\beta,\eta_0)[\eta-\eta_0]$ ($j=1,2,3,4$) that
\begin{multline}\label{defphijk}
\nabla_\eta M(\beta_0,\eta_0)[\eta-\eta_0]  
 =  \sum_{k\in\{0,1\}}
 \mathbb{E}\left[ \psi_{1k}(Y,X) \int_{-\infty<u<Y} \psi_{2k} (u,X)  d\left( (\eta_k-\eta_{0k})(u,X) \right) \right] \\
 + \sum_{k,\ell\in\{0,1\}}
 \mathbb{E}\left[ \psi_{3k}(Y,X) \int_{-\infty<u<Y} \psi_{4k}(u,X) \psi_{5k} \Big((\eta_k-\eta_{0k}) (u,X) \Big)  dH_\ell(u\mid X) \right]
\end{multline}
for certain measurable functions $\psi_{jk}$ ($j=1,\ldots,5; k=0,1$).

It remains to verify condition (2.5).  Note that
\begin{eqnarray*}
 |m(t,\delta,x;\beta_2,\eta_2) -  m(t,\delta,x;\beta_1,\eta_1)| 
\le  C_1(t,\delta,x) \|\beta_2 - \beta_1\| + C_2(t,\delta,x) \|\eta_2 - \eta_1\|_{\mathcal H}
\end{eqnarray*}
for some functions $C_j$ satisfying ${\mathbb E}[C_j^2(Y,\delta,X)] < \infty$ ($j=1,2$), and hence (2.5) follows from assumption (AN5) and Theorem 3 in Chen \emph{et al.} (2003). This finishes the proof.
\end{proof}


\begin{proof}[Proof of Theorem \ref{boot}.]
To prove this theorem we will check the conditions of Theorem B in Chen \emph{et al.} (2003), which gives high level conditions under which the naive bootstrap is consistent.  The only difference between their setting and our setting is that we are proving bootstrap consistency in $\mathbb P$-probability, whereas their result holds true a.s.\ $[\mathbb P]$.  As a consequence, in their high level conditions we can replace all a.s.\ $[\mathbb P]$ statements by the corresponding statements in $\mathbb P$-probability.

First of all, it follows from assumption (AN1) that condition (2.2) in Chen \emph{et al.} (2003) holds with $\eta_0$ replaced by any $\eta$ in a neighborhood of $\eta_0$, and from the proof of Theorem \ref{asno} it follows that the same holds true for condition (2.3).   Next, conditions (2.4B) and (2.6B) in Chen \emph{et al.} 
follow from the fact that we assume that assumption (AN4) continues to hold true if we replace $\widehat H_k-H_k$ by $\widehat H_k^*-\widehat H_k$ ($k=0,1$).  It remains to verify condition (2.5'B) in Chen \emph{et al.} 
This follows from  Theorem 3 in Chen \emph{et al.}, 
whose conditions have been verified already for 
our Theorem \ref{asno}.
\end{proof}



\end{document}